\renewcommand{\c}{\mathfrak{c}}
\newcommand{\lift}[1]{\hat{#1}}
\newcommand{\G}{\mathcal{G}}
\newcommand{\V}{\mathcal{V}}
\newcommand{\E}{\mathcal{E}}
\newcommand{\X}{\mathcal{X}}
\renewcommand{\S}{\mathcal{S}}
\newcommand{\siggy}{_}
\newcommand{\RR}{\mathbb{R}}
\newcommand{\PP}{\mathbb{P}}
\newcommand{\sfM}{\mathsf{M}}
\newcommand{\sfI}{\mathsf{I}}
\newcommand{\sfR}{\mathsf{R}}
\newcommand{\sfE}{\mathsf{E}}
\newcommand{\sfS}{\mathsf{S}}
\newcommand{\sfm}{\mathsf{m}}
\newcommand{\sfi}{\mathsf{i}}
\newcommand{\sfr}{\mathsf{r}}
\newcommand{\sfe}{\mathsf{e}}
\newcommand{\sfs}{\mathsf{s}}
\newtheorem{lemma}{Lemma}
\newtheorem{defin}{Definition}
\newtheorem{prob}{Problem}
\newtheorem{cor}{Corollary}
\newtheorem{thm}{Theorem}
\newcommand{\demi}{\tfrac{1}{2}}
\newcommand{\marg}{p}
\newcommand{\dist}{x}
\newcommand{\adj}{\dag}
\begin{document}
\definecolor{lightgray}{gray}{0.6}

\title{Lifting Markov Chains To Mix Faster:\\ Limits and Opportunities}
\author{Simon Apers,  Francesco Ticozzi and Alain Sarlette
\thanks{Work partially by the University of Padua under the projects CPDA140897/14 and QCON.}
\thanks{S. Apers is with the Department of Electronics and Information Systems, Ghent University, Technologiepark 914, 9052 Zwijnaarde-Ghent, Belgium ({\tt\small simon.apers@ugent.be}); A. Sarlette is with Ghent University and with the QUANTIC lab, INRIA Paris, rue Simone Iff 2, 75012 Paris, France ({\tt\small alain.sarlette@inria.fr}); F. Ticozzi is with the
Dipartimento di Ingegneria dell'Informazione, Universit\`a di
Padova, via Gradenigo 6/B, 35131 Padova, Italy ({\tt\small ticozzi@dei.unipd.it}), and the Department of Physics and Astronomy, Dartmouth College, 3127 Wilder, Hanover, NH (USA). 
}
}

\maketitle

\begin{abstract}
Lifted Markov chains are Markov chains on graphs with added local ``memory'' and can be used to mix towards a target distribution faster than their memoryless counterparts. Upper and lower bounds on the achievable performance have been provided {\em under specific assumptions}. In this paper, we analyze which assumptions and constraints are relevant for mixing, and how changing these assumptions affects those bounds. Explicitly, we find that requesting mixing on either the original or the full lifted graph, and allowing for reducible lifted chains or not, have no essential influence on mixing time bounds. On the other hand, allowing for {\em suitable initialization} of the lifted dynamics and/or {\em imposing invariance of the target distribution for any initialization} do significantly affect the convergence performance. The achievable convergence speed for a lifted chain goes from diameter-time to no acceleration over a standard Markov chain, with conductance bounds limiting the effectiveness of the intermediate cases. In addition, we show that the relevance of imposing ergodic flows depends on the other criteria. The presented analysis allows us to clarify in which scenarios designing lifted dynamics can lead to better mixing, and provide a flexible framework to compare lifted walks with other acceleration methods.
\end{abstract}




\section{Introduction}

The importance of algorithms based on Markov chains has been widely demonstrated. In computer science, random walks and Markov chain Monte Carlo form the backbone of many randomized algorithms to solve tasks such as approximating the volume of convex bodies \cite{dyer1991} or the permanent of a non-negative matrix \cite{jerrum2004}, or to solve combinatorial optimization problems using simulated annealing methods \cite{kirkpatrick1983}. In physics, Markov chain Monte Carlo is an indispensable tool for sampling and simulation of many-body systems. Some examples are the use of Glauber dynamics to simulate the Ising model \cite{martinelli1999} or the Metropolis-Hastings algorithm \cite{metropolis1953,hastings1970} to sample from the Gibbs distribution.

In general, a Markov chain can be used to sample from a probability distribution that is not directly available nor fully known. This is done by engineering a stochastic evolution, obeying the imposed constraints and using only the available information, that converges, or {\em mixes}, to an equilibrium which coincides with the target distribution. What is critical to these applications is {\em how fast} the process approaches its equilibrium. Estimating its convergence speed or \textit{mixing time} is often a main hurdle \cite{aldous2002,levin2009}. Approaches range from estimating the spectral radius of the transition map \cite{lawler1988,diaconis1991} to the use of advanced coupling and stopping time arguments \cite{lovasz1998}.

In order to accelerate the convergence speed, other stochastic evolutions beyond Markov chains have been proposed: among these are {\em lifted Markov chains} \cite{diaconis2000,chen1999,diaconis2013}, where a memory effect is introduced by considering Markov evolutions on an enlarged state space, and {\em quantum walks} \cite{kempe2003}, where the evolution mechanisms are allowed to exploit quantum superposition effects.
In this work, we focus on the former, and discuss {\em under which constraints} the non-Markovian effects induced by the lifting allow for an improvement over the best available Markov chain on the original state space. More specifically, we consider lifts of a Markov chain originally specified on a graph, and we establish how lower bounds on the mixing time stem not only from the topology of the graph itself, but also from a combination of properties imposed on the lifted evolution. Our results add to the recently revived interest in irreversible and beyond-Metropolis-Hastings techniques, some interesting results of which are presented in \cite{turitsyn2011,rey2016,ramanan2016,bierkens2016}. 
In addition, one of the motivations for the present work is to pave the way towards a quantitative and fair comparison between lifted chains and quantum walks: although very different from an internal dynamics perspective, both these models rely on non-Markovian effects and share many formal similarities that can be made precise within the proposed framework. 

Before comparing our analysis with the existing literature, we first introduce the models of interest in Section \ref{sec:lifts}, and the constraints we consider in Section \ref{sec:classes}. This will allow us to properly collocate our results with respect to existing ones and highlight the importance of two aspects that have a key role in lift designs: the ability of locally {\em initializing} the lifted chain, at least as a function of the initial condition on the reference graph, and the constraints regarding {\em invariance} of the target distribution. These two properties allow us to immediately identify some ``extreme'' scenarios, described in Section \ref{sec:extreme}, where either lifting does not yield any advantage or it allows for reaching the target in the minimum possible {\em finite} time, corresponding to the diameter of the graph. This is achieved by embedding in the lift a set of {\em stochastic bridges} \cite{pavon2010,georgiou2015}.

Between these two extreme scenarios, we establish lower bounds on the mixing time when only one of the above constraints is requested, together with constraints on the reducibility of the lifted Markov chain, its own mixing properties and its ergodic flows, in Section \ref{sec:conductance}.
These bounds depend on the {\em conductance} of the graph, which provides a richer description of the graph topology than the diameter. 
In particular, we show that a conductance bound for the mixing time of lifted Markov chains holds under either of two seemingly unconnected constraints: (i) if we impose that the lifted chain mixes from any initial state in the entire lifted state space (this result is essentially due to \cite{chen1999}); or (ii) we impose that the lifted dynamics leaves the target distribution invariant. Conductance bounds are typically stricter than the diameter-time bound, yet examples show how they still allow for the lifted chains to significantly outperform the best possible standard Markov chain.
We note that only one of the two constraints can be imposed if one wants accelerated convergence: when both (i) and (ii) are imposed, the results of the previous sections show that no speedup is possible as compared to a simple Markov chain. On the other hand, if both are relaxed, we are exactly in the diameter-time mixing scenario mentioned above.

A discussion of the results and an outlook on future developments is provided in Section \ref{sec:conclusion}. While some of the specific results we include here may not be quite surprising or could even seem to have incremental value, we believe that as a result of a comprehensive analysis, covering the potential performance of lifted chains for most reasonable design scenarios, we clearly highlight a point that has been to some extent overlooked in previous work: the key role of design constraints in developing mixing algorithms. In this sense, even if the constructions in our proofs are more existential than practical, we believe that the present paper fills a hole in the relevant literature and might inspire new developments towards more practical algorithms.


\section{Mixing Dynamics on Graphs and their Lifts}\label{sec:lifts}

Consider a graph $\G=(\V,\E)$, with $\V$ a set of $N$ nodes, which we label as $\V=\{i=1,\ldots,N\}$ and $\E$ the set of edges i.e. pairs of nodes. Throughout the paper, graphs are assumed to be connected, and functions on the node space will be represented as vectors in $\RR^N$. More details on the linear algebraic representation, in particular on product spaces, are provided in Appendix \ref{app:linear}.  Let $\PP\siggy{N}$ be the set of probability vectors in $\RR^N,$ i.e.~each $\marg \in \PP\siggy{N}$ satisfies $x_i \geq 0$ $\forall i=1,2,...,N$ and $\sum_{i \in \V} \marg_i = 1$. Each such $\marg_i$ represents the probability of node $i\in\V$.
The basic task, throughout the paper, is the following: 
\begin{prob}[Design of mixing dynamics] Design a discrete-time stochastic dynamical system that converges towards a target probability distribution $\pi$ on $\V$, as fast as possible from any initial condition on $\V$, while respecting the locality associated to the graph.
\end{prob}
By respecting the locality of the graph, we mean that the evolution over one time step can only involve transitions between nodes connected by an edge.
The target distributions of interest in the following are $\pi$ with full support, namely such that $\pi_i>0$ for all $i \in [1,\ldots,N].$ These are a dense set in $\PP\siggy{N},$ and allow us to simplify the analysis. 
A number of further constraints on the solution process, which precisely specify the design problem at hand, will be discussed in Section \ref{sec:classes} and, as we shall see, the performance of the best available solution will critically depend on these assumptions.

A common approach to address the mixing problem is to converge towards the steady-state distribution $\pi \in \PP\siggy{N}$ by iterating a linear, stochastic discrete-time map
$$\marg (t+1) = P \, \marg (t) \; .$$
The transition map $P$ contains the probabilities $P_{j,i}$ to jump from node $i$ to node $j$ and it must satisfy the {\em locality constraints} induced by $\G$: namely, $P_{j,i} = 0$ if $\E$ contains no edge from $i$ to $j$. The goal is to converge towards $\pi$ as fast as possible from any initial $\marg (0)$. Note that, as $\pi_i>0$ for all $i,$ $P$ must be irreducible in order to allow $\marg(t)$ to converge to $\pi$ from any $p(0)$. $P$ defines a Markov chain on the node set $\V$.

It has been shown \cite{diaconis2000,chen1999} that convergence can be accelerated, under the same locality constraints, by adding memory to the Markov chain. Formally, this leads to Markov chains on lifted graphs or, for short, {\em lifted Markov chains}.
\begin{defin} \label{def:lift}
A graph $\lift{\G}=(\lift{\V},\lift{\E})$ on $\lift{N}$ nodes is called a {\em lift of $\G$} if there exists a surjective map $\c: \lift{\V} \mapsto \V,$ such that: 
\[(i,j) \in \lift{\E}  \; \text{ implies } \; (\c(i),\c(j)) \in \E.\]  
We denote by $\c^{-1}$ the map that takes as input a single node $k \in \V$ and outputs the set of nodes $j \in \lift{\V}$ for which $\c(j)=k$. 
\end{defin}
We will denote by $\dist\in \PP\siggy{\lift{N}}$ a distribution over the lifted graph nodes $\lift{\V}$. The associated marginal distribution over $\V$ is given by
$\; \marg_k = \sum_{j \in \c^{-1}(k)} \; \dist_j.$
In vector representation, this induces the linear map
\begin{equation}\label{eq:SS1}
\marg = C \dist \, ,
\end{equation}
with $C$ a matrix of zeroes and ones.
In a {\em lifted Markov chain} for $\G$, the distribution $\marg(t)$ on $\V$ at time $t$ is obtained as the marginal of $\dist(t),$ whose evolution is generated by a linear, stochastic discrete-time map \emph{on the lifted graph}:
\begin{equation}\label{eq:SS2}
\dist(t+1) = A \, \dist(t)
\end{equation}
where $A$ satisfies the locality constraints on $\lift{\G}$ induced by the underlying $\G$, i.e.~$A_{j,\ell}\neq 0$ only if $(\c(j),\c(\ell))$ is an edge of ${\cal G}.$  

The locality of the lifted chain equivalently means that for each $\dist$ there exists a stochastic matrix $P^{(x)}$ satisfying the locality constraints of $\G$ and such that \eqref{eq:SS2} corresponds to $\; \marg(t+1) = P^{(\dist)}\, \marg(t),$ with $\marg(t)=C\dist(t)$ as in \eqref{eq:SS1}. Explicitly, this is given by $\;P^{(\dist)} = C\, A \, B^{(\dist)}$ where $B^{(\dist)}$ is a linear stochastic map from $\mathbb{R}^N$ to $\mathbb{R}^{\lift{N}}$, with
\[\; B^{(\dist)}_{i,j} = \frac{\dist_i(t)}{\sum_{k \in \c^{-1}(j)} \dist_k(t)} \;  \textrm{ if } \c(i)=j, \;\] and $B^{(x)}_{i,j} = 0 \;$ otherwise.\vspace{2mm}

\noindent{\em Remark:} 
In Markov modeling, $A$ would be called a hidden Markov chain \cite{rabiner1986}. In algorithmic applications, which motivate our setting, the pair $(\lift{\G},A)$ is to be \emph{designed} in order to accelerate the convergence towards $\pi$ with respect to the (best) Markov chain $P$ on the original graph $\G$. The use of $x$ as the variable in the larger state space is inspired by system theory, and it also hints to the fact that the dynamics of $x$ are to be designed.




\section{Mixing time and design scenarios}\label{sec:classes}


The main message for this paper is that what is possible in terms of mixing performance is critically dependent on some constraints and insensitive to some others, besides the locality associated to the graph of interest $\G$. Hence, before we proceed, we need to carefully specify the constraints and emerging design scenarios. It will be clear that even the most natural definition of mixing time crucially depends on the imposed constraints. 

\subsection{Initialization of the lift}
When a stochastic dynamics is seen as an algorithm, one must specify how to initialize it.
We assume that the input of such an algorithm is a node of $\V$, chosen accordingly to an initial distribution $\marg(0)=\marg_0,$ on which we have no control. We consider two possible scenarios and the associated set of initial distributions $\S$:

\begin{itemize}
\item[$(\sfS)$] In a first scenario, it is possible to initialize the 
lifted evolution depending on the initial input.
The algorithm design can then, in addition to $\lift{\G}$ and $A$, choose how to lift the weight $\marg_k(0)$ attributed to each node $k \in \V$ of the original graph $\G$, onto a distribution over its associated lifted nodes $\dist_{\c^{-1}(k)}(0)$ in agreement with the locality constraints. We further require, in order to maintain linearity of the whole process, that the designed initialization is a \emph{linear} map $F : \marg(0) \mapsto \dist(0),$ satisfying:
$$C F \marg(0) = \marg(0) \quad \forall \marg(0) \; .$$
The latter plays the role of a {\em locality constraint} on $F,$ as it  implies $F_{k,j} = 0$ whenever $\c(k) \neq j$.
In this scenario, the set of relevant initial conditions $\S$ for the {\em lifted} Markov chain does not comprise all possible distributions $\dist(0)$ on $\lift\V$, but only those of the form $\dist(0) = F \marg(0)$, for all initial distributions $\marg(0)$ on $\V$. 

\item[$(\sfs)$] In some other cases, there might be no control over the initialization of the lifted dynamics. The set of relevant initial conditions $\S$ is then the whole $\PP\siggy{\lift{N}}$.
\end{itemize}

\subsection{Invariance of the target marginal}

For a Markov chain on $\G,$ mixing is necessarily towards its unique \emph{invariant} distribution, i.e.~$P \pi = \pi$. For a lifted Markov chain, however, even if the marginal converges to $\pi,$ in the transient $C\dist(t) = \pi$ does not necessarily imply $C\dist(t+1)= \pi$. One may request that the system does not leave the target $\pi$ when it starts there at $t=0$. This mimics the invariance of the target in an un-lifted Markov chain. It arguably imposes to ``avoid unnecessary work'', and it plays an essential role towards interfacing the Markov chain with other algorithmic elements, in particular implementing the key task of \textit{amplification}, i.e., boosting the success probability of a randomized algorithm (in our case the closeness to the stationary distribution) by rerunning the algorithm on its own output, see e.g.~\cite{motwani2010}. We thus identify two possible scenarios:
\begin{itemize}
\item[$(\sfi)$] We impose $C \dist(t) = \pi$ for all $t > 0$ whenever $C \dist(0) = \pi$, for all allowed $\dist(0)\in\S$.

\item[$(\sfI)$] We allow $C\dist(t) \neq \pi$ for some $t\geq 0$ and some $\dist(0)$ even when $C\dist(0)=\pi$.
\end{itemize}

\subsection{Marginal vs lift mixing time}
A time-homogenous Markov chain on $\G$ associated to a transition matrix $P$ is said to mix to $\pi$ if $P\pi=\pi$ 
{and for all $\epsilon>0$ there exists $\tau(\epsilon)>0$ such that, for all $p \in \PP\siggy{N}$, we have: 
  	  	\[ \|P^t p-\pi\|_{TV} \leq \epsilon \; 
  	  						\text{ for all } \; t \geq \tau(\epsilon). \]}
We call $\tau(\epsilon)$ its $\epsilon$-mixing time\footnote{For completeness, we recall that the total variation distance between two distributions $\marg(1)$ and $\marg(2)$ is $1/2$ times the 1-norm of their difference $\Vert \marg(1) - \marg(2)\Vert_1 = \sum_{i=1}^N |\marg_i(1)-\marg_i(2)|$.}.
\noindent It is typical to consider $\tau(1/4)$ as a reference mixing time. 
Most papers that bound the mixing time of lifted Markov chains, analyze in fact how fast the state \emph{on the lifted space} $\dist$ converges to its stationary value $\bar{\dist}$, i.e.~they consider $\|A^t \dist -\bar{\dist}\|_{TV}$. Our original algorithmic task however is to accelerate convergence of the \emph{marginal} $\marg(t)=C\dist(t)$, compared to the performance of the original chain $P$. To this aim, we define the marginal mixing time.
\begin{defin}[Marginal mixing time]
A lifted chain on $\lift\G$ associated to a transition matrix $A$ is said to mix to the marginal $\pi$ on $\G$ from initial conditions $\S$, if
{for all $\epsilon>0$ there exists $\tau_M(\epsilon)>0$ such that for all $\dist \in \S$ we have: 
  	  	\[ \|CA^t \dist-\pi\|_{TV} \leq \epsilon \; 
  	  						\text{ for all } \; t \geq \tau_M(\epsilon). \]}
We call $\tau_M(\epsilon)$ its {\em $\epsilon$-marginal mixing time}.
\end{defin}

{Of course, $\tau_M(\epsilon) \leq \tau(\epsilon)$ for all $\epsilon$. While the convergence of $\dist$ is indeed a sufficient proxy for the convergence of $\marg=C\dist$, it is not truly necessary. One might argue that for generic $A$ we expect $\dist$ and $\marg=C\dist$ to have similar convergence speeds. However in our application, the specific lifts designed to speed up convergence are all but generic, and this distinction could become relevant.} For instance, some typical designs involve constructions where the lifted Markov chain $\dist$ in fact does not converge to a stationary value, but the projected state $\marg$ does -- see the periodic clock lift in Appendix \ref{ssec:tools}.
Furthermore, it is easy to construct lifted walks where $\marg$ converges much faster than $\dist$.
One should wonder whether conversely, a Markov chain with quickly converging $\marg$ can always be adapted to also have quickly converging $\dist$, or whether sometimes there is a strict advantage to be gained when we are only interested in $\marg$.
We therefore need to specify which type of convergence we are requesting:
\begin{itemize}
\item[$(\sfM)$]  convergence of the marginal $\marg(t)$ towards $\pi$, as measured by $\tau_M(\epsilon).$
\item [$(\sfm)$] convergence of $\dist(t)$ towards $\bar{\dist}$, as measured by $\tau(\epsilon).$
\end{itemize}

\subsection{Reducibility of the lift}
A Markov chain $P$ that globally converges to a unique stationary distribution $\pi$ with $\pi_i > 0$ $\forall i$, must be irreducible -- that is, there cannot exist a partition of $\V$ into subsets $\X$ and $\V \setminus \X$ such that $P_{i,j}=0$ for all $(i,j)$ with $i \in \X$ and $j \in \V \setminus \X$. However, the same need not necessarily apply to a lifted Markov chain $A$, depending on the rest of the setting. Hence two scenarios emerge:
\begin{itemize}
\item[$(\sfR)$] The lifted Markov chain $A$ is allowed to be reducible. 
\item [$(\sfr)$] The lifted chain $A$ must be irreducible.
\end{itemize}
It is easy to come up with combinations of the other constraints for which $(\sfR)$  makes sense -- as well as cases where we have convergence of any $\dist(0)$ towards a unique $\lift{\pi}$, but where $\lift{\pi}_i = 0$ for some $i$.

\subsection{Matching ergodic flows}

When $\G$ and $\pi$ are given, one can think about the optimization problem of computing the compatible Markov chain $P$ with fastest convergence towards $\pi$, see for instance \cite{boyd2004} for symmetric $P$. For the lifted Markov chain, one would perform a similar optimization on $A$. Yet in some cases, one may be given a reference $P$ whose associated ``typical flows'', for some reason, should not be altered or increased by the transitions induced by the lifted chain \cite{chen1999}. In that case, one would request the optimization on $A$ to take into account the reference $P$ on $\G$.

More precisely, for a given Markov chain $P$, the associated ergodic flows are defined by $Q^{(P)}_{i,j} = P_{i,j}\pi_j$ i.e.~the weight that flows from $j$ to $i$ when the system is on the steady state $y=\pi$. For a lifted chain $A$ and (one of) its steady state(s) $\lift{\pi}$, one can similarly define ergodic flows $\lift{Q}^{(A;\lift{\pi})}_{i,j} = A_{i,j} \lift{\pi}_j$. To compare ``typical flows'' in $A$ and $P$, one then compares $Q^{(P)}_{i,j}$ and \[\lift{Q}^{(A;\lift{\pi})}_{\c^{-1}(i),\c^{-1}(j)} = \sum_{\ell,k : \c(\ell)=i,\c(k)=j}  \lift{Q}^{(A;\lift{\pi})}_{\ell,k}.\] The Markov chain $\tilde{P}^{(A;\lift{\pi})}$ on $\G$ defined by $\tilde{P}^{(A;\lift{\pi})}_{i,j} \pi_j = \lift{Q}^{(A;\lift{\pi})}_{\c^{-1}(i),\c^{-1}(j)}$ is called the {\em induced chain on $\G$ by the lift $A$ and distribution $\lift{\pi}$}. This leads to the following scenarios.
\begin{itemize}
\item[$(\sfe)$] A reference Markov chain $P$ is given and the ergodic flows of $A$ must match the ergodic flows of $P$, or in other words, the induced Markov chain $\tilde{P}^{(A;\lift{\pi})}$ must be equal to the reference $P$. When $A$ has several steady states $\lift{\pi}$ and hence several ergodic flows $\lift{Q}^{(A;\lift{\pi})}$, they must all have the same induced chain $\tilde{P}^{(A;\lift{\pi})} = P$.

\item[$(\sfE)$] No constraint is imposed on the ergodic flows of the lifted Markov chain.
\end{itemize}

It is worth noting that the definition of ergodic flows for irreducible $A$ is in fact an extension of the traditional definition, see for instance \cite{aldous2002}.


\subsection{On combinations of constraints and relations between lift design problems}

In the following, in order to compactly refer to a set of requirements in the statements of our results and to carry out proper comparisons, we shall specify an alternative (upper- or lower-case letter) for each of the scenarios described in the previous subsections, e.g. $(\sfs\sfI\sfm\sfr\sfE)$. 

We denote {\em the set of dynamics allowed by design scenario by the corresponding string of 5 letters}.
 A shorter string may be used to indicate properties that hold for all the compatible alternatives: e.g. $(\sfs\sfI\sfm\sfE)$ includes all scenarios in $(\sfs\sfI\sfm\sfr\sfE)$ and $(\sfs\sfI\sfm\sfR\sfE)$.
The lower- and upper-case letters in the previous subsections have been chosen so that the upper-case properties yield scenarios that are {\em less constrained} than the one with the same rest of the string but a lower-case letter. This implies that a scenario associated to multiple capital letters always includes all lifts available in a scenario where some of those letters are substituted by their lowercase versions, i.e.:
{\begin{eqnarray}\label{eq:properties1}
(\sfS)\vert_x \supseteq (\sfs)\vert_x & ; & 
(\sfI)\vert_x \supseteq (\sfi)\vert_x \\
\nonumber
(\sfM)\vert_x \supseteq (\sfm)\vert_x & ; & 
(\sfR)\vert_x \supseteq (\sfr)\vert_x \;\; ; \; (\sfE)\vert_x \supseteq (\sfe)\vert_x  \; ,
\end{eqnarray}
where $\vert_x$ denotes arbitrary} but equal choices on the left and right hand sides for the other four letters.
From this observation follows that lower or at most equal optimal mixing times are expected when relaxing the constraints from lowercase to uppercase.  This will be useful in assessing the potential speedup of the scenarios. For example, scenarios with $\sfS$, $\sfI$, $\sfM$, $\sfR$ have the possibility to better improve mixing performance, compared to the fastest non-lifted Markov chain $P,$ with respect to their non-capital counterparts. 

The comparison of $\sfE,\sfe$ scenarios and non-lifted chains needs particular care, because $\sfe$ is the only constraint type that uses a reference non-lifted Markov chain $P.$ Indeed, requiring no particular ergodic flows allows to select better lifted walks, 
but it simultaneously gives the freedom to optimize $P$ for the non-lifted process.

Considering a binary alternative for each property yields $2^5 = 32$ possible (fully specified) scenarios in total, which are not necessarily all equally relevant. One might question for instance the practical situations where one would encounter $(\sfS\sfm)$ or $(\sfm\sfR)$. We leave such considerations to the end-user, and here report the bounds on the mixing speed for all scenarios.
Finally, a very important aspect of engineered mixing dynamics is the amount of resources they require for implementation. Following other abstract work on mixing bounds, we will not consider this aspect in detail, as it further depends on the specific application and the available information. All the constructions we employ in the proofs, while having mostly existential rather than practical value, have a number of lifted nodes polynomial in $N$.

\subsection{Discussion of previous work}

Previous work by \cite{diaconis2000,diaconis2013} and \cite{chen1999} considered the $(\sfs\sfI\sfm\sfr)$ or $(\sfs\sfI\sfm\sfr\sfe)$ context. In particular, \cite{chen1999} established the conductance bound we derive in Theorem \ref{cor:conductancecor1}, for the particular scenario $(\sfs\sfI\sfm\sfr\sfe)$. More recently, this result was extended to the continuous-time case \cite{ramanan2016}, which we will not treat here. In the following sections, however, we will prove that such bounds can be derived from just the $(\sfs)$ and $(\sfs\sfe)$ constraints, and that these constraints are also in some sense necessary when other requirements are missing: if they are omitted then the diameter becomes the trivial yet tight lower bound.

Jung et al.~\cite{jung2010} introduced the concept of a ``pseudo-lift'' as a relaxation of the lifted Markov chain formalism, by allowing post-selection on the sampling outcome. They show that pseudo-lifts can mix in diameter time on any graph, yet due to post-selection their construction is effectively a Las Vegas--type algorithm. As our analysis is concerned with Monte Carlo--type algorithms, discussion of pseudo-lifts goes beyond the scope of the present work. It might however be possible to cast their construction as a higher-lifted Markov chain Monte Carlo algorithm. Our results then show that necessarily this construction would fail both the $(\sfs)$ and $(\sfi)$ constraints.

Theorem \ref{thm:diameter} on diameter-time mixing is reminiscent of finite-time consensus results such as those in \cite{hendrickx2014}. Here we have a more constrained setting, including positivity constraints and time-invariant dynamics. 
Allowing for a lifted dynamics takes care of the latter, as we shall see, while positivity is intrinsically built in our framework apparently without affecting the fast convergence.



\section{{Minimal and maximal acceleration of mixing}:\\ the interplay of invariance and initialization}\label{sec:extreme}

We start by identifying the scenarios for which the lift cannot provide any advantage in mixing time with respect to a regular Markov chain, and those that allow for the fastest (diameter time) mixing. Remarkably, the only constraints that are relevant to determine these ``extreme'' behaviors concern the capability of initializing the lift, together with the requirement on the invariance of the target distribution $\pi$.


\subsection{Scenarios where lifting does not speed up mixing}


We start by showing that, under the constraints $(\sfs\sfi),$ the lifted Markov chain cannot go faster than the best non-lifted chain $P$ compatible with the graph, {\em even if we only look at the marginal mixing time.} This is made precise in the following result.
\begin{thm}\label{thm:nolift} In all scenarios featuring constraints $(\sfs\sfi),$ for any lifted Markov chain $(\hat {\cal G},A)$ whose marginal $\marg_t=CA\dist_t$ mixes to $\pi,$ there exists a stochastic matrix $P^\mathfrak{q}$ such that $\marg_{t+1}=P^\mathfrak{q}\marg_t$ for all $t.$
\end{thm}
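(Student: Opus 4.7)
The plan is to show that the marginal evolution is itself driven by a Markov chain on $\V$, by proving that $CA$ factors through $C$: there exists a stochastic matrix $P^{\mathfrak{q}}$ with $CA = P^{\mathfrak{q}} C$, so that $\marg(t+1) = CA\dist(t) = P^{\mathfrak{q}}\marg(t)$ follows for every $t$ and every lifted state $\dist(t)$.

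The crux of the argument is the claim that $CAv = 0$ for every $v \in \ker C$. To prove it, I would exploit the full-support assumption on $\pi$ by picking a strictly positive lifted distribution $\dist_0$ with $C\dist_0 = \pi$ (for example the one spreading each weight $\pi_k$ uniformly over $\c^{-1}(k)$). Given any $v$ with $Cv = 0$, for $\epsilon > 0$ sufficiently small both $\dist_0$ and $\dist_0 + \epsilon v$ are entry-wise nonnegative and hence valid probability distributions on $\lift\V$, and both project to $\pi$ under $C$. Since scenario $(\sfs)$ admits any element of $\PP\siggy{\lift N}$ as initial condition, invariance $(\sfi)$ applied at time $t=1$ yields $CA\dist_0 = CA(\dist_0 + \epsilon v) = \pi$; subtracting gives $CAv = 0$.

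With the kernel property in hand, $P^{\mathfrak{q}}$ can be defined columnwise by $P^{\mathfrak{q}} \e_k := CA \e_j$ for an arbitrary $j \in \c^{-1}(k)$, the choice of $j$ being immaterial since any two such basis vectors $\e_j, \e_{j'}$ differ by a vector of $\ker C$. Stochasticity of $P^{\mathfrak{q}}$ is direct: each column $CA\e_j$ has nonnegative entries because $A$ and $C$ do, and it sums to one since $A$ is column-stochastic and $C$ preserves column sums. Locality of $P^{\mathfrak{q}}$ with respect to $\G$ is inherited from that of $A$ on $\lift\G$, because the only $(\c(j),\c(\ell))$ that contribute to the entry $P^{\mathfrak{q}}_{i,k}$ are edges of $\G$. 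Combining this factorization with $\marg(t)=C\dist(t)$ then yields $\marg(t+1) = P^{\mathfrak{q}} \marg(t)$.

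The only genuinely delicate step is the kernel-vanishing argument. It requires applying $(\sfi)$ to \emph{two} different lifted distributions sharing the marginal $\pi$, which is possible only because $(\sfs)$ removes every restriction on admissible initial conditions, and because the full-support assumption on $\pi$ lets the reference $\dist_0$ be chosen strictly positive so that small perturbations inside $\ker C$ remain nonnegative. Relaxing either $(\sfs)\to(\sfS)$ or $(\sfi)\to(\sfI)$ breaks this argument, which is consistent with the subsequent diameter-time mixing results showing that the lift can then significantly outperform any unlifted chain on $\G$.
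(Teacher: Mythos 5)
Your proof is correct and follows essentially the same route as the paper's: both arguments use $(\sfs)$ together with $(\sfi)$ and the full support of $\pi$ to show that $CA$ annihilates $\ker C$, and then define $P^{\mathfrak{q}}$ columnwise via a choice of representative in each fiber $\c^{-1}(k)$. Your perturbation of a strictly positive reference $\dist_0$ by $\epsilon v$ is a slightly cleaner packaging of the paper's construction (which rescales and mixes $x$ and $\dist^{(q)}$ with an auxiliary $x'$ to land both on marginal $\pi$), but the underlying convexity trick is identical.
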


\begin{proof}
The essential idea of the proof is that, in order for the underlying lifted dynamics to satisfy invariance of $\pi$ for all initializations on $\lift{\V}$, it is necessary that any two $\dist^{(1)},\; \dist^{(2)}$ for which $C \dist^{(1)} = C \dist^{(2)}$, induce the same flow on $\G$.
Since we have $(\sfs)$, the lift can start from any distribution $x$ over $\lift{\V}$. Invariance $(\sfi)$ then requires that for any $x$ for which $Cx = \pi$, we have $C A x = \pi$.

Given a lifted Markov chain satisfying $(\sfs\sfi)$, consider a map $\mathfrak{q} : \V \mapsto \lift{\V}$ that maps every $j \in \V$ to a single node $k_j \in \lift{\V}$ for which $\c(k_j) = j$. Let $\dist=q(\marg)$ denote the distribution with $\dist _{\mathfrak{q}(j)} = \marg _j$ for all $j \in \V$, and $\dist _i=0$ for all remaining $i \in \lift{\V}$. Defining 
$$P^\mathfrak{q}_{i,j} = \sum_{\ell \in \c^{-1}(i)} A_{\ell,\mathfrak{q}(j)}\; ,$$
we will show that for any $\dist(t)$ with $\marg (t) = C \dist(t)$, the lifted Markov chain satisfies 
\begin{equation}\label{eqh1:tp}
\marg (t+1) = P^\mathfrak{q} \marg (t) \; ,
\end{equation}
i.e.~it behaves like the non-lifted Markov chain $P^\mathfrak{q}$.
Proving \eqref{eqh1:tp} amounts to proving that
\begin{equation}\label{eq:Pqdist}
C A \dist = P^\mathfrak{q}  C \dist
\end{equation}
for all $\dist \in \PP\siggy{\lift{N}}$. For any $\dist$ of the form $\dist = q(\marg)$, with $\marg \in  \PP\siggy{N}$, we indeed have \eqref{eq:Pqdist} by construction. For any other $\dist$, defining $\dist^{(q)} = q (C \dist)$, there remains to show that $C A \dist = C A \dist^{(q)}$.
To do so, select some $a>0$ such that $a \pi_j > \marg _j$ for all $j \in \V$ and define $\pi' = \eta\, (a \pi - y)$, with $1/\eta = \sum_{j \in \V} (a \pi_j - \marg _j) = a-1$ i.e.~$a=1+1/\eta$. It is easy to check that $\pi' \in \PP\siggy{N}$. Now select any distribution $x'$ over $\lift{\V}$ such that $C x' = \pi'$ and let $\dist^{(1)}= (x + x'/\eta) / a$, $\;\; \dist^{(2)}= (\dist^{(q)} + x'/\eta) / a,$\;\; which are properly normalized distributions. We then have by construction $a (\dist^{(1)}-\dist^{(2)}) = x-\dist^{(q)}$, and with $C \dist^{(1)} = C \dist^{(2)} = \pi$. Invariance $(\sfi)$ requires that $C A \dist^{(1)}=C A \dist^{(2)} = \pi$, which readily implies 
$\; C A (x-\dist^{(q)}) = 0\;$.
\end{proof}


\subsection{Scenarios where lifting allows for diameter-time mixing}

A basic bound on mixing time is that, under locality constraints, the equilibrium distribution cannot be reached in a time that is shorter than the graph diameter. The same bound holds also for time-inhomogeneous (non-lifted) dynamics, and it is easy to see that lifted dynamics must satisfy it as well. 
We next show that a class of scenarios allow for mixing in diameter time.
Remarkably, this is possible for any graph, as soon as we are allowed to suitably initialize the lifted chain and we don't necessarily require invariance of $y(0)=\pi$. The proof uses basic building blocks for constructing lifts, which we develop in Appendix \ref{ssec:tools}, and we urge the reader interested in the details to read this Appendix first. While scenarios including $(\sfr\sfe)$ constraints are not directly covered by the following result, we will provide a corollary that addresses a slightly relaxed problem.

\begin{thm}\label{thm:diameter}
All scenarios in $(\sfS\sfI),$ with the exception of $(\sfS\sfI\sfr\sfe),$ admit a lifted Markov chain for which $\tau_M(1/4) \leq \tau(1/4) \leq D_\G+1,$ with $D_\G$ the graph diameter; the associated lifted graph has of order $D_\G N^2$ nodes.
\end{thm}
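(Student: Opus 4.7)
The plan is to exploit both design freedoms granted by $(\sfS\sfI)$: a tailored initialization $F$ that routes each input mass $\marg_i(0)$ into a dedicated ``branch'' of the lift labeled by its source $i$, together with the freedom to let $C\dist(t)$ leave $\pi$ during the transient. Since every node of $\V$ lies within $D_\G$ steps of any source $i$, the idea is to deliver $\marg_i(0)$ to the target $\pi$ along shortest paths in exactly $D_\G$ steps via a \emph{stochastic bridge} from $\delta_i$ to $\pi$, and then compile these time-inhomogeneous routings into a single time-homogeneous $A$ via the \emph{clock lift}. Both tools are developed in Appendix~\ref{ssec:tools}, and crucially the fact that $\pi$ has full support guarantees that every bridge exists.

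Concretely, I would take $\lift{\V}$ to consist of nodes $(i,v,t)$ with $i\in\V$ the source label, $v\in\V$ the current position and $t\in\{0,\ldots,D_\G-1\}$ the clock, together with a common terminal layer $\{(v,\star):v\in\V\}$; the node count is of order $D_\G N^2$. The projection $\c(i,v,t)=\c(v,\star)=v$ makes this a valid lift of $\G$, and the linear initialization $F$ that places mass $\marg_i(0)$ at $(i,i,0)$ trivially satisfies $CF=I$. The transition $A$ acts as $(i,v,t)\to(i,v',t+1)$ with probability $[P^{(i)}_t]_{v',v}$, where $P^{(i)}$ is the stochastic bridge from $\delta_i$ to $\pi$ of length $D_\G$, and the last bridge step delivers mass into the terminal layer; on that terminal layer $A$ acts as any locally compatible stochastic $Q$ with $Q\pi=\pi$, which exists for every connected $\G$ and full-support $\pi$ (for instance the Metropolis--Hastings chain). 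A direct computation then shows that after $D_\G$ steps all probability has been delivered to the terminal layer with weight $\pi_v$ at $(v,\star)$, which is a genuine fixed point $\bar{\dist}$ of $A$, yielding $\tau_M(\epsilon)\leq\tau(\epsilon)\leq D_\G$ for every $\epsilon$, well within the claimed bound.

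The main obstacle is securing a nontrivial fixed point of $A$ at the terminal layer, so that the \emph{lift} mixing time, not only the marginal one, obeys the bound, under the constraint that $A$ uses only genuine edges of $\G$ (no self-loops are assumed). This is what the terminal chain $Q$ is tasked with, and it is why one cannot simply collapse the terminal layer into a single node. Beyond this, I would verify the remaining letters case by case. For $(\sfR)$ the construction works as is, the lift being naturally reducible across sources. For $(\sfr)$ paired with $(\sfE)$, irreducibility is enforced by adding a small return flow from the terminal layer into the source branches; this perturbs the fixed point but at the cost of at most one extra step, accounting for the ``$+1$''. For $(\sfe)$ paired with $(\sfR)$, the bridge distributions and $Q$ can be calibrated so that the induced chain $\tilde{P}^{(A;\lift{\pi})}$ matches a prescribed reference $P$, using the fact that the source branches carry zero stationary mass and contribute nothing to the ergodic flows. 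The only combination in which these two fixes conflict is $(\sfS\sfI\sfr\sfe)$: matching prescribed ergodic flows and insisting on irreducibility jointly prevent the asymmetric, branch-based routing that the bridges require, which is why this case is deferred to the subsequent corollary.
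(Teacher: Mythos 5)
Your construction is essentially the paper's: stochastic bridges from each $e_i$ to $\pi$, compiled into a time-homogeneous chain by a node-clock lift with source label, position and clock, initialized by $F$ placing $\marg_i(0)$ at $(0,i,i)$, with a terminal layer running a $\pi$-invariant chain. The $(\sfR)$ cases, the $(\sfe\sfR)$ calibration via the zero-stationary-mass branches, and the exclusion of $(\sfS\sfI\sfr\sfe)$ all match the paper's treatment.

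There is, however, a genuine gap in your handling of $(\sfS\sfI\sfr\sfE)$. You add a small return flow of weight $\gamma$ from the terminal layer back into the source branches and claim this ``perturbs the fixed point but at the cost of at most one extra step.'' The cost is not temporal: once the chain is irreducible with this return flow, its unique stationary distribution $\lift{\pi}$ places $O(\gamma)$ mass on every transient branch layer, and the marginal $C\lift{\pi}$ of that stationary distribution is no longer $\pi$ but $\pi + O(\gamma)$. Since the definition of marginal mixing requires $\|CA^t\dist - \pi\|_{TV} \leq \epsilon$ for \emph{every} $\epsilon > 0$ at large $t$, a chain whose stationary marginal differs from $\pi$ by any fixed amount does not mix to $\pi$ at all, so no choice of ``small'' $\gamma$ rescues the argument by itself. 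The paper resolves this by replacing the terminal-layer chain $P$ with a nearby $\tilde{P}$, constructed explicitly via a rooted-spanning-tree correction $P' = \tilde{P}-P$ solving $P'\tilde{\pi} = y$ under the locality and column-sum constraints, chosen precisely so that $C\lift{\pi} = \pi$ exactly for the perturbed chain. Your proposal is missing this correction step, and without it (or an equivalent fix) the $(\sfr)$ branch of the theorem is not established.
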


\begin{proof}
We use the notation $e_i$ for a unit vector with all components 0 except $i$. Given any $\G$ and $\pi$, for each node $i \in \V$ we can build the stochastic bridge (see Appendix \ref{ssec:tools}) from a $\marg (0)$ concentrated on $i,$ towards the target $\marg(D_\G)=\pi$. We can then combine these bridges via a node-clock-lift (see Appendix \ref{ssec:tools}) into a single lifted Markov chain which, when initialized ($\sfS$) with $F_{(s=0,v_0=i,v=i),i} = 1$ $\forall i \in \V$ and all other $F_{i,j}=0$, converges exactly ($\epsilon =0$) to $\pi$ in $D_\G$ time steps (and stays there during the following steps). Moreover, any such $\dist(0)$ converges to $\dist(t\geq D_\G+1)=\dist(D_\G+1)=e_{D_\G+1} \otimes \pi \otimes \pi$. 
This proves the claim for $(\sfS\sfI\sfm\sfR\sfE)$ and $(\sfS\sfI\sfM\sfR\sfE)$, since the resulting lift is reducible and does not follow any specific ergodic flows. 
The latter is easy to circumvent with the singular definition of ergodic flows with $(\sfR)$: in the last term of \eqref{eq:node-clock}, replace $I_\V\otimes I_{\V}$ by $I_\V\otimes P$. Indeed, since $P \pi = \pi$, this will not change the final marginal, but it does ensure that the ergodic flows are exactly those of $P$ on $\pi$, proving the theorem for $(\sfS\sfI\sfR)$.
\vspace{1mm}

The case $(\sfS\sfI\sfr\sfE)$ is the last that remains to be covered: to this aim, we must do three things.
First, we add to the lift a small probability to jump back to the layer of initialization nodes, i.e.~we now replace the last term $e_{T+1} e_{T+1}^\adj \otimes I_{\V} \otimes I_{\V}$ in \eqref{eq:node-clock} by
$$ (1-\gamma) e_{T+1} e_{T+1}^\adj \otimes I_\V \otimes P + \gamma J $$
with $J_{(t=0,v,v),(t=D_\G+1,v_0,v)}=1$ for all $v_0,v \in \V$ and all other $J_{i,j}=0$, and $\gamma\ll 1$ a small positive parameter. Second, we drop the lifted nodes that are never populated in the full node-clock-lift, in order to obtain an irreducible graph. 
Third, we must adjust some transition values from $(1-\gamma) e_{T+1} e_{T+1}^\adj \otimes I_\V \otimes P$ to $(1-\gamma) e_{T+1} e_{T+1}^\adj\otimes I_\V \otimes \tilde{P}$ in order to ensure that $C\lift{\pi}=\pi$ the target steady state. To compute this adjustment, we first note that with $\tilde{P}$ the steady state $\lift{\pi}$ will be of the form
\begin{eqnarray}\label{eq:herepi}
\lift{\pi} &= \frac{1}{1+(D_\G+1)\gamma} e_{D_\G+1} \otimes \pi \otimes \tilde{\pi} \\
	&+ \sum_{t=0}^{D_\G} \, \frac{\gamma}{1 + (D_\G+1) \gamma} A^t F \tilde{\pi} \; ,
\end{eqnarray}
where $\tilde{\pi} = \gamma \pi + (1-\gamma)\tilde{P}\tilde{\pi}$. We will {\em a)} first compute a $\tilde{\pi}$ close to $\pi$ and such that $C \lift{\pi} = \pi$ in the expression \eqref{eq:herepi}, without caring about its relation to $\tilde{P}$; {\em b)} next, we will show how to construct a corresponding $\tilde{P}$.

\noindent $a)$ Let $ \sum_{t=0}^{D_\G}\; C A^t F =: (D_\G+1)B$. We thus want that $\left(\frac{1}{1+(D_\G+1)\gamma} \; + \; \frac{(D_\G+1) \gamma}{1 + (D_\G+1) \gamma} B \right)\; \tilde{\pi} = \pi$. By taking $\gamma$ small enough, we can ensure that this equation is evidently invertible and gives a solution $\tilde{\pi}$ that equals $\pi$ up to terms of order $\gamma$. In particular, we can make sure that $\tilde{\pi}_k \geq \alpha > 0$ for all $k$ and for some $\alpha$, for all $\gamma>0$ smaller than some $\gamma_0$.

\noindent $b)$ There remains to find $\tilde{P}$ such that $\tilde{P}\tilde{\pi} = \tfrac{1}{1-\gamma}(\tilde{\pi}-\gamma\pi)$. Since $P$ is irreducible, there must exist some $\beta>0$ such that the edges $(i,j)$ of $\G$ for which $P_{i,j}\geq \beta$, contain a rooted spanning tree $\G_\beta$. Writing $\tilde{P} = P + P'$ and defining $y = (P+\gamma/(1-\gamma)) \, (\tilde{\pi}-\pi)$, we have to solve
$$
P' \tilde{\pi} = y \;\; , \quad \sum_{k=1}^N\,P'_{k,\ell} = 0 \;\; \text{for } \ell=1,2,...,N \;$$ $$ \quad P'_{k,\ell} = 0 \text{ for all } (k,\ell) \notin \E \; ,
$$
knowing that $\sum_{k=1}^N y_k = 0$. Note that we impose no positivity constraint on $P+P'$ because, with $y$ of order $\gamma$, if we can solve the above system without singularities, then the $P'$ will always be of order $\gamma$ as well, and hence it will not disturb $P$ too much whenever $\gamma \ll \beta$. We can construct $P'$ from the rooted spanning tree $\G_\beta$ of $\G$. Consider any leaf $j$ of this spanning tree and its parent $k$. We set $P'_{j,k}=y_j/\tilde{\pi}_k$ to satisfy $P' \tilde{\pi} = y$ for row $j$. Furthermore, we add $-y_j/\tilde{\pi}_k$ to $P'_{k,k}$ to maintain $\sum_{\ell=1}^N\, P'_{\ell,k} = 0$. Once all the ``children'' $\{j\}$ of a node $k$ have been treated, we can turn to satisfying $P' \tilde{\pi} = y$ for row $k$ by setting the value of $P'_{k,\ell}$, associated to the parent $\ell$ of $k$, taking into account the contributions already present via $P'_{k,k} = \sum_j (-y_j/\tilde{\pi}_k)$. We thus set $P'_{k,\ell} = (y_k + \sum_j y_j)\,/\, \tilde{\pi}_\ell$, and again to maintain $\sum_{b=1}^N\, P'_{b,\ell} = 0$ we also add $- (y_k + \sum_j y_j)\,/\, \tilde{\pi}_\ell$ to $P'_{\ell,\ell}$. We can pursue this construction, with elements of $P'$ bounded by some multiple of $1/\tilde{\pi}_i < 1/\alpha$, until we reach the root. At this point, $P' \tilde{\pi} = y$ is satisfied for all rows except one, corresponding to the root $r$. Writing $\sum_{k \neq r} [P' \tilde{\pi}]_k - y_k = 0$ and from our problem setting $\sum_{k=1}^N\, P'_{k,\ell} \tilde{\pi}_\ell = \sum_{k=1}^N y_k = 0$, we see that in fact $P' \tilde{\pi} = y$ is satisfied for row $r$ as well, and we have constructed an appropriate $P'$, i.e.~$\tilde{P}$ close to $P$ such that $C\lift{\pi} = \pi$.
Thanks to the very low value of $\gamma$, a total variation distance $\leq 1/4$ towards $\pi$ is still reached as soon as a walk initialized with $\dist(0) = F \marg(0)$ reaches the top layer of our node-clock-lift, i.e.~after $D_\G+1$ steps, although the walk has not perfectly reached the steady state yet.
%
\end{proof}

While we do not have a full construction for the $(\sfS\sfI\sfr\sfe)$ case, we can use a variation of the construction for $(\sfS\sfI\sfr\sfE)$ provided in the above proof in order to approximately match some prescribed ergodic flows, with {\em arbitrary accuracy.} Formally, let us define the scenario $(\sfS\sfI\sfr\sfe_\delta)$, where $\sfe_\delta$ means that ergodic flows are allowed to deviate by no more than $\delta$ from the imposed ones, for any $\delta>0.$ We then have the following:

\begin{cor} \label{cor:diameter-time}
For $(\sfS\sfI\sfr\sfe_\delta),$ we can construct a lifted Markov chain with the mixing time $\tau_M(1/4) \leq \tau(1/4) \leq D_\G+1$.
\end{cor}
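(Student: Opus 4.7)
The plan is to build directly on the $(\sfS\sfI\sfr\sfE)$ lifted chain already constructed in the proof of Theorem~\ref{thm:diameter} and show that, as the auxiliary parameter $\gamma$ is made small, the induced chain $\tilde{P}^{(A;\lift{\pi})}$ on $\G$ converges entrywise to the prescribed reference $P$. The corollary then follows by choosing $\gamma=\gamma(\delta)$ small enough for every entry of $\tilde{P}^{(A;\lift{\pi})}$ to lie within $\delta$ of the corresponding entry of $P$, while the $\tau(1/4)\leq D_\G+1$ bound continues to hold.

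First I would quantify the $\gamma$-dependence of the steady state $\lift{\pi}$. From~\eqref{eq:herepi}, the total mass supported outside the top clock-layer $t=D_\G+1$ equals $(D_\G+1)\gamma/(1+(D_\G+1)\gamma)$, hence is of order $\gamma$. On the top layer, $\lift{\pi}$ restricts to $e_{D_\G+1}\otimes \pi \otimes \tilde{\pi}/(1+(D_\G+1)\gamma)$ with $\tilde{\pi}=\pi+O(\gamma)$, and the correction $P'=\tilde{P}-P$ built from the spanning tree $\G_\beta$ has all nonzero entries bounded by a constant multiple of $\gamma/\alpha$, where $\alpha>0$ is the uniform lower bound on $\tilde{\pi}_k$ valid for $\gamma<\gamma_0$.

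Next I would compute, for each edge $(\ell,k)\in\E$, the induced ergodic flow $\lift{Q}^{(A;\lift{\pi})}_{\c^{-1}(k),\c^{-1}(\ell)}$ by splitting it into three pieces: the contribution from the top-layer block $(1-\gamma)(e_{D_\G+1}e_{D_\G+1}^\adj\otimes I_\V \otimes \tilde{P})$, the contribution from the stochastic-bridge blocks, and the contribution from the back-jump term $\gamma J$. A direct tensor computation using $\lift{\pi}_{(D_\G+1,v_0,\ell)}=\pi_{v_0}\tilde{\pi}_\ell/(1+(D_\G+1)\gamma)$ and summing over the initialization index $v_0\in\V$ yields a top-layer contribution equal to $(1-\gamma)\tilde{P}_{k,\ell}\tilde{\pi}_\ell/(1+(D_\G+1)\gamma)=P_{k,\ell}\pi_\ell+O(\gamma)$. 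The contributions from the bridge blocks and from $\gamma J$ are each bounded by a constant multiple of $\gamma$, since the total steady-state mass they carry is itself $O(\gamma)$. Dividing by $\pi_\ell$, which is bounded below by a positive constant, yields $\tilde{P}^{(A;\lift{\pi})}_{k,\ell}=P_{k,\ell}+O(\gamma)$ uniformly over edges, so that the deviation is less than $\delta$ entrywise for $\gamma$ small enough.

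The main obstacle is keeping all these perturbation estimates uniform in the combinatorial choices of the construction, in particular the spanning tree $\G_\beta$ and the representative map $\mathfrak{q}$ used to define $F$: one must verify that the recursion defining $P'$ produces entries bounded by a constant multiple of $\gamma/\alpha$ regardless of the order in which the leaves are processed, which follows from the telescoping structure of the recursion and the uniform lower bound $\tilde{\pi}_k\geq \alpha$. Finally, the mixing-time bound $\tau(1/4)\leq D_\G+1$ of Theorem~\ref{thm:diameter} is preserved for every sufficiently small $\gamma$, since the stochastic bridge still delivers $\pi$ exactly at time $D_\G+1$ from the initialization $\dist(0)=F\marg(0)$, and the only perturbation within the window $[0,D_\G+1]$ comes from the back-jump of weight $\gamma$, whose total-variation effect is $O((D_\G+1)\gamma)$ and can therefore be made arbitrarily small.
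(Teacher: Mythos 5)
Your proposal is correct and follows essentially the same route as the paper: reuse the $(\sfS\sfI\sfr\sfE)$ construction from Theorem~\ref{thm:diameter} and observe that $\tilde{P}\to P$ and $\tilde{\pi}\to\pi$ as $\gamma\to 0$, so the induced ergodic flows are within $\delta$ of those of $(P,\pi)$ once $\gamma$ is small enough, while the diameter-time bound is unaffected because the $\gamma$-perturbation only acts from the top clock layer; the paper states this in a single sentence and your version merely spells out the $O(\gamma)$ estimates. The only small slip is attributing the map $\mathfrak{q}$ (which belongs to the proof of Theorem~\ref{thm:nolift}) to the definition of the initialization $F$ here, but this does not affect the argument.
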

\proof
The construction described in the proof of Theorem \ref{thm:diameter} produces a $\tilde{P}$ close to $P$ and a distribution $\tilde{\pi}$ close to $\pi$ on the top layer of the node-clock-lift, such that the corresponding ergodic flows can be made $\delta$-close to those imposed by $P,\pi$, for any $\delta > 0$, by taking $\gamma$ small enough.
\qed\vspace{2mm}

{\em Remark:} It may be interesting to note that fast convergence of $\dist(t)$ towards $\lift{\pi}$, holds not only for the $1/4$ distance, but in fact up to any distance $\epsilon > 0$.
This can be seen by observing that the ``clock'' degrees of freedom (see Appendix \ref{ssec:tools}) undergo an independent Markov chain on $\V^{(clock)} = \{ s=0,1,...,D_\G+1 \}$ with transition matrix
\begin{equation}\label{eq:forTh3Pprime}
P^{(clock)} =  \sum_{i=0}^{D_\G} e_{i+1}e_i^\adj + (1-\gamma) e_{D_\G+1} e_{D_\G+1}^\adj + \gamma e_0 e_{D_\G+1}^\adj \; .
\end{equation}
This is a particular Markov chain on a path or cycle graph, which is independent of the original problem and $\G$, except for its length $D_\G$.
Then write $y^{(clock)} = \pi^{(clock)} + q$ with $q$ the deviation from the stationary $\pi^{(clock)}$ of $P^{(clock)}$, satisfying $\sum_{i=0}^{D_\G+1} q_i = 0$.
Then one gets
$$\sum_i \vert q_i(t+D_\G+1) \vert \leq 2 (D_\G+1) \gamma \sum_i \vert q_i(t) \vert $$
by explicit computation of $(P^{(clock)})^{D_\G+1}$, bounding $q_i(t+D_\G+1)$ for $i=0,...,D_\G$ with the property $(1-\gamma)^k \leq 1$ for all $k$, and estimating $q_{D_\G+1}(t+D_\G+1)$ from $\sum_i q_i = 0$. Fixing any $\gamma$ such that $2 \gamma (D_\G+1) \leq \alpha < 1$ then indeed ensures a fast convergence rate $\alpha$ for all times. Regarding the other degrees of freedom of the lift, it is obvious that the distribution over the different nodes $(s,v_0,v)$ associated to a fixed clock value $s$, is exactly proportional to $A^s F \pi$ for any times larger than $D_\G+1$, and modulo proper initialization $\dist(0)=F \marg (0)$.
\vspace{2mm}


\section{Results on Conductance Bounds}\label{sec:conductance}

\subsection{Existing conductance bounds on the mixing time}

A key quantity, widely used in obtaining bounds on the mixing time \cite{aldous1987,lawler1988,mihail1989}, is the conductance of a stochastic $P$ on $\G$. For a subset $\X \subseteq \V$ let $\pi(\X) = \sum_{i \in \X} \pi_i$, where we recall that $\pi$ is the stationary distribution under $P$. The {\em conductance $\Phi(P)$ of $P$} is defined as \cite{levin2009}:
\begin{equation}\label{eq:conductance}
\Phi(P) = \min_{\X \subset \V ;  \pi(\X) \leq \tfrac{1}{2}}  \frac{\sum_{i \in \X, j \notin \X}  P_{j,i}\, \pi_i}{\pi(\X)} \, .
\end{equation}
This characterizes the minimal steady-state probability flow that is cut when separating the nodes into two disjoint sets of weight less than $\demi$. Given only a graph $\G$ and a target stationary distribution $\pi$ over $\V$, {\em the conductance $\Phi$ of $\G$ towards $\pi$} is the maximum of $\Phi(P)$ over all stochastic $P$ satisfying the locality constraints of $\G$ and whose unique stationary distribution is $\pi$. If $\pi$ is the uniform distribution, then $\Phi$ is upper bounded by the edge expansion of $\G$.

{The conductance can be used to bound how the minimum time for the convergence of a mixing process is constrained.}
Loosely speaking, it is known that $\tau(1/4)$ is of the order of $1/\Phi$ or larger, for any Markov chain $P$ ({\em Conductance bound}, \cite{levin2009}); and \cite{chen1999} among others proved that the same bound holds for lifted Markov chains. These bounds are however proven only in the scenario $(\mathsf{sImre}),$ {which is a quite restrictive setting among those we consider.}
Clarifying whether such a bound, or a variation thereof, would hold for different scenarios is one of the main aims for this paper.

The following relationship holds between the conductance and the diameter (see for instance \cite{fountoulakis2007}):
\[ \Phi(P) \leq \frac{4\log(1/\pi_{\min})}{D_\G-1}, \]
where $\pi_{\min}$ is the minimum element of $\pi$ and $c$ a constant.
On the other hand, the conductance can be arbitrarily small, irrespective of the diameter, as we will see with the Barbell graph in Example 2 below.

Before going on, let us briefly comment on the conductance $\Phi$ when no $P$ is imposed. This expresses how the mixing time is constrained by the {\em graph topology} and the stationary distribution $\pi$ alone and it is natural to anticipate that this will play a role in lift scenarios with $(\sfE)$. In this context, the same caveat as after \eqref{eq:properties1} is in order: when relaxing the scenario from $(\sfe)$ to $(\sfE)$, better lifts are admitted but also possibly a more favorable conductance. In particular, a fair treatment shall compare for each graph $\G$, the fastest possible lifted Markov chain $A$ (in terms of $\tau(1/4)$) with the best possible conductance $\Phi(P)$ (over all admissible $P$), where the optimal $P$ may differ from the $\tilde{P}^{(A;\lift{\pi})}$ obtained as an induced Markov chain of the fastest lift.


\subsection{Scenarios providing advantage within the conductance bound}

{\noindent Let us start with two examples.

 \noindent \textbf{Example 1 (\emph{Diaconis lift on the cycle}, see \cite{diaconis2000}):} Consider a stochastic process on the finite cycle graph, i.e.~the graph with nodes $\V=\{1,\ldots, N\}$, and where node $k$ is connected by an edge to nodes $k+1 \mod N$ and $k-1 \mod N$. The idea is to modify the standard random walk on the graph, where from any given node one moves with equal probability to either of the two neigbors, in order to make the walker's next step depend on \emph{its last move} on the cycle $\G$. Explicitly, an extended graph is constructed by associating to each node $k \in \V$ of the original graph, two nodes $(\pm 1, k)$ which indicate if the current position $k$ has been reached from $k+1$ or $k-1$ respectively. 
The lifted node set thus becomes:
$\lift{\V} = \{ (s,k) : k=1,2,...,N \text{ and } s \in \pm1\, \}.$ According to Definition \ref{def:lift}, the allowed edge set consists of the edges $\lift{\E} = \{ ((s',k\pm 1 \mod N),(s,k)) : k=1,2,...,N \text{ and } s,s' \in \pm1\, \}.$ The lifted graph is constructed by keeping a subset only of all the in principle authorized edges, while maintaining circular symmetry.
In the following, $e_{\pm}$ denotes the column vectors $(1,0)^T$ and $(0,1)^T$, $P^{(\pm 1)}$ denote clockwise and anti-clockwise rotation on the cycle $\G$, respectively, and $\otimes$ is the Kronecker product. In a matrix representation, the lifted transition map takes the form:
$$A = \sum_{i,j \in \pm 1}  Q_{i,j} \; e_i e_j^\adj \otimes P^{(j)} \; .$$
A walker on node $(+,k)$ will thus always move towards position $k+1$, although in principle the lifted graph could also have allowed it to move back to $k-1$. The element $Q$ is a stochastic matrix which defines a Markov chain on the additional states $\pm1$, with $Q_{+1,+1}=Q_{-1,-1} = 1-1/N$ and $Q_{+1,-1}=Q_{-1,+1} = 1/N$. With this choice, maintaining the same ``direction'' of movement is preferred, but there is a small $1/N$ probability of switching. The allowed transitions and the relative probabilities are depicted in Figure \ref{fig:Diaconis}.

The mixing time $\tau(1/4)$ \emph{of the whole distribution on $\lift{\V}$} with this lift is of order $N$, while the mixing time $\tau(1/4)$ of any non-lifted walk on the cycle (see e.g.~\cite{gerencser2011}, showing the best is the simple walk with probability $1/2$ to take each edge) would be of order $N^2$.
  
\begin{figure}[htb]
	\centering
	\begin{minipage}{.49\textwidth}
     \def\svgwidth{\columnwidth}
     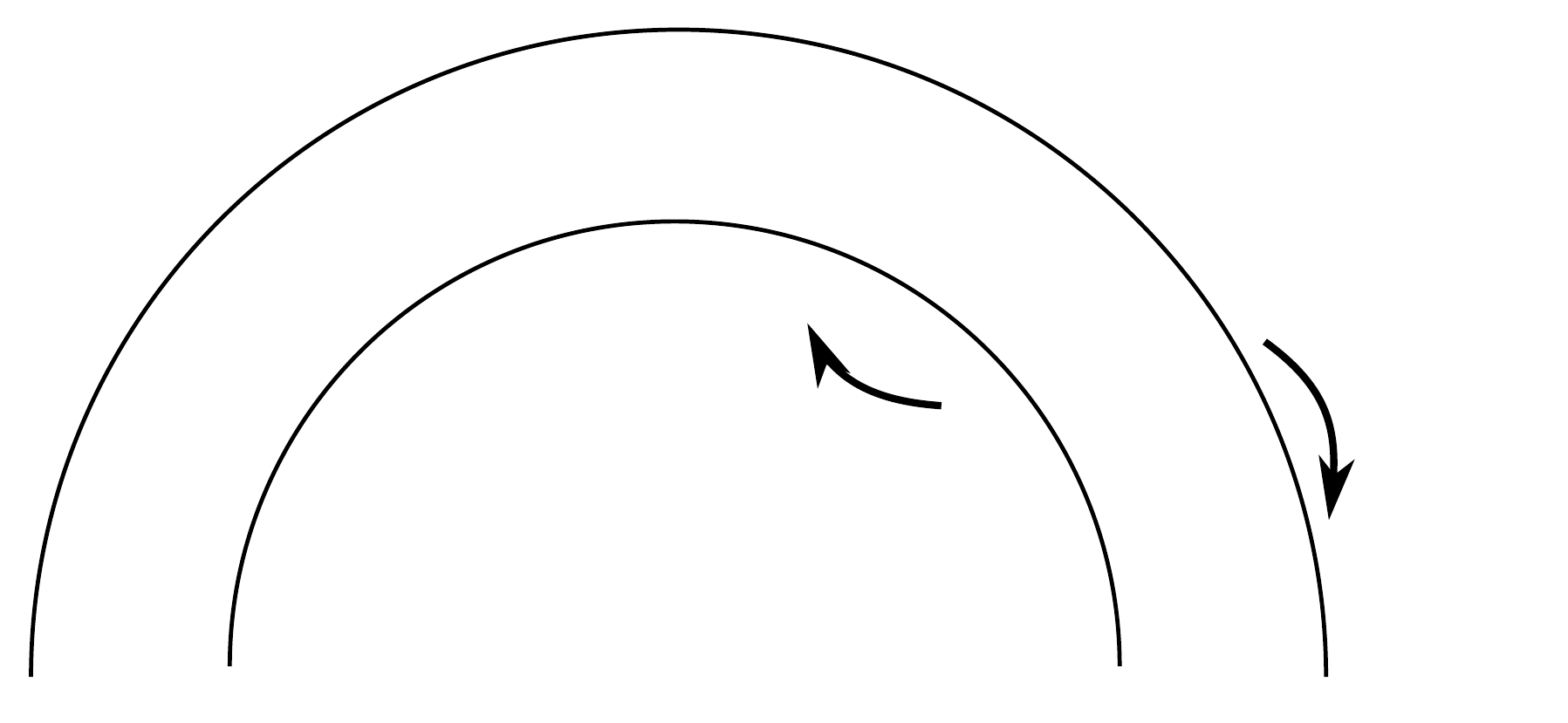
    \end{minipage} \begin{minipage}{.49\textwidth}\vspace{5mm}
    \includegraphics[width=0.95\columnwidth]{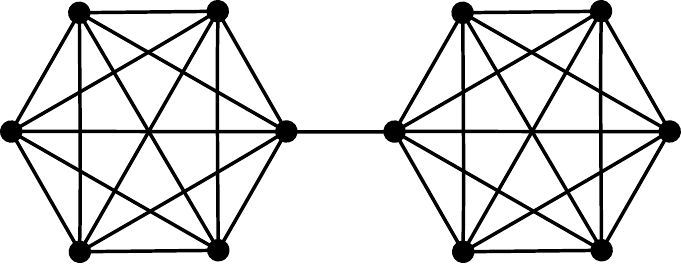}
    \end{minipage}
	\caption{(upper) The transitions for the ``Diaconis lift'' on the cycle with $N$ nodes. (lower) The Barbell graph on $2n=12$ nodes.}
	\label{fig:Diaconis}
\end{figure}

For large $N$, this speedup becomes significant. It is obtained within a $(\sfs\sfI\sfm\sfr\sfE)$ scenario, or $(\sfs\sfI\sfm\sfr\sfe)$ with the reasonable constraint of imposing ergodic flows with circular symmetry. This lift does not satisfy invariance, i.e.~starting at $x$ with $C x = \pi$ would not necessarily imply $C A x = \pi$. Indeed, just consider e.g.~$\dist_i(0)=1/N$ for $i=(+1,3)$ and $i=(-1,1)$ $\in \lift{\V}$, and an arbitrary distribution of the weight $1/N$ over the set $\frak{c}^{-1}(k)$ associated to each other node of $k \in \V\setminus\{1,3\}$. Then in the first step, no weight can flow to the nodes $\frak{c}^{-1}(2) = \{(+1, 2),(-1,2)\}$,  which in turn lose their own weight to neighbors $(\pm1,1)$ and $(\pm1,3)$, i.e.~we have $(C x)_2 = 0$. This loss of invariance is consistent with the fact that else, i.e.~with $(\sfs\sfi)$, the setting would have to satisfy Theorem \ref{thm:nolift} and hence could not feature any speedup.

The diameter of the cycle graph is of course $D_\G = N/2$, so the Diaconis lift appears to reach (order of) the optimum considered in Theorem \ref{thm:diameter}, yet by satisfying the more constraining setting $(\sfs\sfI)$. Now since for the cycle in particular the conductance is of order $1/N$, the Diaconis lift satisfies the $1/\Phi$ conductance bound as well, which here is equivalent to the diameter bound. The next example shows that for some graphs, $1/\Phi$ can significantly differ from $D_\G$. 

For completeness, let us mention that for the cycle one can build a lift satisfying $(\sfS\sfi)$ and converging \emph{exactly} to the uniform distribution in $D_\G+1$ steps. Indeed the lift constructed for the proof of Theorem 2, by exploiting the symmetry, will satisfy invariance. This does not contradict the upcoming bounds involving conductance because for the cycle, conductance and diameter give the same bound. It does warrant caution however, namely highlighting that a lower bound $\tau_*$ on $\tau(1/4)$ does not necessarily imply an associated exponential convergence with characteristic time $\tau_*$.
\hfill $\square$}
\vspace{2mm}

\noindent \textbf{Example 2 (\emph{Barbell graph}):} The $2n$-node Barbell graph $K_n-K_n$, see Figure \ref{fig:Diaconis}, consists of two completely connected graphs on $n$ nodes, connected by a single ``central'' edge $(n,n+1) \in \E$. This graph is a notable example in mixing time studies because of the clear bottleneck behavior of this central edge \cite{aldous2002,boyd2009}. As a consequence, the inverse conductance $1/\Phi$ associated to the uniform distribution $\pi$ on the Barbell graph, is unavoidably significantly larger than the diameter $D_\G = 3$. Indeed, consider the particular cut $\X=\X_n$ where $\X_n$ contains all the nodes on one side of the central edge $(n,n+1)$; the latter is thus the only one to be cut. We have
\begin{align*}\label{eq:conductance}
\Phi &= \max_P \Phi(P) = \max_P \min_{\X \subset \V ;  \pi(\X) \leq \tfrac{1}{2}}  \frac{\sum_{i \in \X, j \notin \X}  P_{j,i}\, \pi_i}{\pi(\X)} \\
& \leq \max_P  \frac{\sum_{i \in \X_n, j \notin \X_n}  P_{j,i}\, \pi_i}{\pi(\X_n)} = \max_P  \frac{P_{n,n+1}\, \tfrac{1}{2n}}{1/2} \leq 1/n
\end{align*}
since the central edge can at most have $P_{n,n+1}=1$. \hfill $\square$
\vspace{2mm}

\vspace{3mm} The previous examples show that there exist situations where (i) lifts do allow to significantly accelerate mixing, compared to the best non-lifted walks; and (ii) the standard conductance bound is significantly more constraining than the diameter bound of Theorem \ref{thm:diameter}. The aim of this section is precisely to identify scenarios where, while a lifted Markov chain could outperform the non-lifted chains (as in Example 1), it can never significantly beat the conductance bound. 
 It is important to note that, in this sense, {Example 1 is not just a particular case, and the lifts satisfying the constraints in the scenarios characterized here} are indeed expected to exceed the performance of the best possible non-lifted Markov chain $P$. In support of this, \cite{chen1999} shows that  a lift satisfying the constraints $(\sfs\sfI\sfm\sfr\sfe)$ can reach the conductance bound, modulo allowing a stochastic ``stopping rule'' (see e.g.~\cite{lovasz1998}).

\vspace{3mm}\noindent  We start with some preliminary results. 

%
%
%
%
%


\begin{lemma} \label{lemma:cut}
Consider a stochastic matrix $P$, not necessarily irreducible, on a node set $\V$ and one of its stationary distributions $\pi$. Take any $\X \subseteq \V$ such that $\pi(\X) \neq 0$ and define the distribution $\tilde{\pi}^{(\X)}$ by 
$$\tilde{\pi}^{(\X)}_i = \left\{\begin{array}{l}
\eta \, \pi_i \text{ for } i \in \X \\
0 \text{ for } i \notin \X
\end{array}\right.
\quad \text{with } \tfrac{1}{\eta} = \sum_{i \in \X} \pi_i = \pi(\X) \; .$$
Then for all $t \geq 1$ we have
$$ \sum_{j \notin \X} (P^t \tilde{\pi}^{(\X)})_j  \; \leq \; t \, \Phi_{\X,\pi}(P) \; ,$$
where $\Phi_{\X,\pi}(P) = \frac{\sum_{i \in \X, j \notin \X}  P_{j,i}\, \pi_i}{\pi(\X)}\; $, can be viewed as a conductance associated to $\pi$ and the particular subset $\X$.
\end{lemma}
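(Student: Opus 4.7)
My plan is to prove the bound by a simple mass-accounting argument: I track how much probability mass can possibly leak out of $\X$ per time step, and show that this leak is controlled by $\Phi_{\X,\pi}(P)$ at every step because the iterate $P^t \tilde{\pi}^{(\X)}$ stays dominated entrywise by the scaled stationary distribution $\eta \pi$.

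The first step is to establish the entrywise inequality $P^t \tilde{\pi}^{(\X)} \leq \eta \pi$ for all $t \geq 0$. By construction $\tilde{\pi}^{(\X)} \leq \eta \pi$ holds coordinatewise (equality on $\X$, zero versus non-negative on $\V\setminus\X$). Since $P$ is a non-negative matrix with $P\pi = \pi$, applying $P$ to the non-negative vector $\eta\pi - \tilde{\pi}^{(\X)}$ yields $\eta\pi - P\tilde{\pi}^{(\X)} \geq 0$, and iterating the argument gives $\eta\pi - P^t \tilde{\pi}^{(\X)} \geq 0$ for every $t$. Note this step does not require $P$ to be irreducible, so the lemma is valid in the stated generality.

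Next, I denote $y_t = P^t \tilde{\pi}^{(\X)}$ and write down the mass balance between consecutive steps for the portion living on $\V\setminus\X$:
\begin{equation*}
\sum_{j \notin \X} (y_{t+1})_j \;=\; \sum_{j \notin \X} (y_t)_j \;+\; \underbrace{\sum_{j \notin \X,\, i \in \X} P_{j,i}\,(y_t)_i}_{\text{flow out of }\X} \;-\; \underbrace{\sum_{j \in \X,\, i \notin \X} P_{j,i}\,(y_t)_i}_{\text{flow back into }\X}.
\end{equation*}
The back-flow term is non-negative, so discarding it only weakens the inequality. Using the entrywise bound from the first step on $(y_t)_i \leq \eta \pi_i$ for $i \in \X$, the forward-flow term is at most
\begin{equation*}
\eta \sum_{j \notin \X,\, i \in \X} P_{j,i}\,\pi_i \;=\; \eta\,\pi(\X)\,\Phi_{\X,\pi}(P) \;=\; \Phi_{\X,\pi}(P),
\end{equation*}
by the definition of $\eta$ and $\Phi_{\X,\pi}(P)$.

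Finally, I telescope. Starting from $\sum_{j \notin \X}(y_0)_j = 0$ since $\tilde{\pi}^{(\X)}$ is supported on $\X$, iterating the single-step increment bound $t$ times gives $\sum_{j \notin \X}(P^t \tilde{\pi}^{(\X)})_j \leq t\,\Phi_{\X,\pi}(P)$, as required. I do not anticipate any essential obstacle here: the only subtlety worth flagging is that the domination $y_t \leq \eta\pi$ relies on $P\pi = \pi$ and positivity of $P$, and must be stated for arbitrary (possibly reducible) $P$ since the lemma is later invoked in exactly that generality when applied to lifted chains.
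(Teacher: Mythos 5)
Your proof is correct, but it takes a genuinely different route from the paper's. The paper first shows the identity $\sum_{j \notin \X} (P \tilde{\pi}^{(\X)})_j = \Vert P \tilde{\pi}^{(\X)} - \tilde{\pi}^{(\X)} \Vert_{TV}$ (using the one-step domination $(P\tilde{\pi}^{(\X)})_u \leq \tilde{\pi}^{(\X)}_u$ on $\X$), bounds $\sum_{j \notin \X}(P^t\tilde{\pi}^{(\X)})_j$ by $\Vert P^t \tilde{\pi}^{(\X)} - \tilde{\pi}^{(\X)}\Vert_{TV}$, and then telescopes this total variation distance via the triangle inequality and the $\ell_1$-contraction property of stochastic matrices, reducing every increment to the first one. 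You instead telescope the quantity of interest directly: you establish the uniform-in-$t$ entrywise domination $P^t\tilde{\pi}^{(\X)} \leq \eta\pi$ (which is the iterated form of the same key observation the paper uses once), write the exact mass balance for the weight outside $\X$, discard the non-negative back-flow, and bound the per-step leakage by $\Phi_{\X,\pi}(P)$. Your argument is more elementary --- it needs neither the total variation identities nor the contraction lemma --- and it makes the ``at most $\Phi_{\X,\pi}(P)$ of mass escapes per step'' intuition explicit; it also correctly flags that only $P\pi=\pi$ and non-negativity are used, so reducibility of $P$ is harmless, which matters for the later application to lifted chains. What the paper's route buys is a set of intermediate total-variation inequalities, in particular \eqref{eq:ineq}, that are reused almost verbatim in the proof of Lemma \ref{lemma:lift-mixing}; with your version those would have to be re-derived there. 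Both arguments yield the identical bound.
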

\begin{proof}
We will first prove and later use the following facts:
\begin{eqnarray} \label{eq:ineq}
\sum_{j \notin \X} (P \tilde{\pi}^{(\X)})_j &=& \left\Vert P \tilde{\pi}^{(\X)} - \tilde{\pi}^{(\X)} \right\Vert_{TV} \\
\sum_{j \notin \X} (P^t \tilde{\pi}^{(\X)})_j &\leq& \left\Vert P^t \tilde{\pi}^{(\X)} - \tilde{\pi}^{(\X)} \right\Vert_{TV} \, ,
	   		\;\forall t\geq 0.\nonumber
\end{eqnarray}
To obtain the equality in \eqref{eq:ineq}, we rewrite the total variation distance:
$$ \left\Vert P \tilde{\pi}^{(\X)} - \tilde{\pi}^{(\X)} \right\Vert_{TV}
	= \sum_{u \in \V : (P \tilde{\pi}^{(\X)})_u \geq \tilde{\pi}^{(\X)}_u} \; (P \tilde{\pi}^{(\X)})_u - \tilde{\pi}^{(\X)}_u \; .$$
We then observe that $(P \tilde{\pi}^{(\X)})_u \geq \tilde{\pi}^{(\X)}_u = 0$ trivially for all $u \notin \X$, while the following computations yield the opposite conclusion for all $u \in \X$. Indeed, by the definition of $\tilde{\pi}^{(\X)}$, we have
\begin{eqnarray*}
(P \tilde{\pi}^{(\X)})_u & = & \sum_{j\in \V}P_{u,j}\tilde{\pi}^{(\X)}_j = \sum_{j\in \X} P_{u,j}\frac{\pi_j}{\pi(\X)} \\
& \leq & \frac{\sum_{j\in \V} P_{u,j}\pi_j}{\pi(\X)} = \frac{\pi_u}{\pi(\X)} = \tilde{\pi}^{(\X)}_u \, .
\end{eqnarray*}
Thus the rewritten total variation distance reduces to $\; \sum_{j \notin \X} \big(\, (P \tilde{\pi}^{(\X)})_j -0 \, \big)$.

To obtain the inequality in \eqref{eq:ineq}, we expand the total variation distance:
   	\begin{align*} 
   	  \|P^t \tilde{\pi}^{(\X)} - \tilde{\pi}^{(\X)} \|_{TV} 
   	  	=& \frac{1}{2}\sum_{u\in \V}|(P^t\tilde{\pi}^{(\X)})_u - \tilde{\pi}^{(\X)}_u|\\
   		=& \frac{1}{2} \sum_{u\notin \X} (P^t\tilde{\pi}^{(\X)})_u \\&+ \frac{1}{2}\sum_{u\in \X}|(P^t\tilde{\pi}^{(\X)})_u - \tilde{\pi}^{(\X)}_u| \\
   		\geq& \frac{1}{2}\sum_{u\notin \X} (P^t\tilde{\pi}^{(\X)})_u \\&+ \frac{1}{2}\left\vert\sum_{u\in \X}(P^t\tilde{\pi}^{(\X)})_u - \tilde{\pi}^{(\X)}_u\right\vert \\
		=& \frac{1}{2}\sum_{u\notin \X} (P^t\tilde{\pi}^{(\X)})_u\\&  + \frac{1}{2}\left\vert \left(1-\sum_{u\notin \X} (P^t\tilde{\pi}^{(\X)})_u\right) - 1 \right\vert \\
   		&= \sum_{u\notin \X} (P^t\tilde{\pi}^{(\X)})_u \; ,
   	\end{align*}
thus proving \eqref{eq:ineq}.	

Next we now obtain (see justifications below):
\begin{align} 
 \Vert P^t&\tilde{\pi}^{(\X)}-\tilde{\pi}^{(\X)} \Vert_{TV} \nonumber\\ \nonumber
 \leq& \Vert P^t\tilde{\pi}^{(\X)}-P^{t-1}\tilde{\pi}^{(\X)} \Vert_{TV}+ \Vert P^{t-1}\tilde{\pi}^{(\X)}-P^{t-2}\tilde{\pi}^{(\X)} \nonumber \Vert_{TV} \\ &+ \dots + \Vert P\tilde{\pi}^{(\X)} - \tilde{\pi}^{(\X)} \Vert_{TV} \nonumber\\ 
 \leq&  t \; \Vert P\tilde{\pi}^{(\X)} - \tilde{\pi}^{(\X)} \Vert_{TV} = t \; \Phi_{\X,\pi}(P) \; .
\end{align}
From the first to second line we have used the triangle inequality on the $\ell_1$ norm; from second to third line, we have used recursively that any stochastic matrix $P$ contracts the $\ell_1$ norm \cite{levin2009}, i.e.~for arbitrary distributions $\marg ^{(1)}$ and $\marg ^{(2)}$ we have $\; \Vert P \marg ^{(1)} - P \marg ^{(2)} \Vert_{TV} \leq \Vert \marg ^{(1)} - \marg ^{(2)} \Vert_{TV} \;$; for the last equality, we have used the equality in \eqref{eq:ineq} and the explicit computation
$ \; \sum_{j \notin \X} (P \tilde{\pi}^{(\X)})_j = \sum_{u\in X,j\notin X}P_{j,u}\tilde{\pi}^{(\X)}_u
= \sum_{u\in X,j\notin X} \frac{P_{j,u} \pi_u}{\pi(\X)} \; $ where the last expression is the definition of $\Phi_{\X,\pi}(P)$.
The stated result then follows by using inequality \eqref{eq:ineq} in the first line of the above chain of inequalities.
\end{proof}
\vspace{2mm}

\begin{lemma} \label{lemma:lift-mixing}
Consider a lifted Markov chain $A$ on $\lift{\V}$ and one of its steady states $\lift{\pi}$. Then the mixing time $\tau_M(1/4)$ for scenarios with $(\sfs\sfM)$ 
satisfies
$\;\; \tau_M(1/4) \geq \frac{1}{4\Phi(\tilde{P}^{(A;\lift{\pi})})},\;\;$
where $\tilde{P}^{(A;\lift{\pi})}$ is the induced chain on $\V$ associated to $A$ and $\lift{\pi}$.
\end{lemma}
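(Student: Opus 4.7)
The plan is to exhibit a single adversarial initial distribution $\dist(0)\in\PP\siggy{\lift N}$ (which the $(\sfs)$ scenario permits) and show that the induced marginal remains macroscopically far from $\pi$ for a time of order $1/\Phi(\tilde{P}^{(A;\lift{\pi})})$. First I would note that under $(\sfs\sfM)$ every stationary distribution $\lift{\pi}$ of $A$ must satisfy $C\lift{\pi}=\pi$, since otherwise initializing at $\dist(0)=\lift{\pi}$ would preclude marginal mixing to $\pi$; consequently $\pi$ is the stationary distribution of the induced chain $\tilde{P}^{(A;\lift{\pi})}$ and its conductance $\Phi(\tilde{P}^{(A;\lift{\pi})})$ is well-defined. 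I would then fix a subset $\X\subseteq\V$ with $\pi(\X)\leq 1/2$ that realizes this minimum and choose $\dist(0):=\tilde{\lift{\pi}}^{(\hat{\X})}$, the restriction-and-renormalization of $\lift{\pi}$ to $\hat{\X}:=\c^{-1}(\X)$ in the notation of Lemma \ref{lemma:cut}.

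The key algebraic step will be to identify the conductance of the lifted chain at the cut $\hat{\X}$ with that of the induced chain at the cut $\X$. Unwinding definitions, the numerator of $\Phi_{\hat{\X},\lift{\pi}}(A)$ regroups ergodic flows $A_{j,i}\lift{\pi}_i$ according to the base nodes $(\c(i),\c(j))$ to give
\[
  \sum_{k\in\X,\,\ell\notin\X}\lift{Q}^{(A;\lift{\pi})}_{\c^{-1}(\ell),\c^{-1}(k)} \;=\; \sum_{k\in\X,\,\ell\notin\X}\tilde{P}^{(A;\lift{\pi})}_{\ell,k}\,\pi_k,
\]
while the denominator reduces via $C\lift{\pi}=\pi$ to $\pi(\X)$. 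Hence $\Phi_{\hat{\X},\lift{\pi}}(A)=\Phi_{\X,\pi}(\tilde{P}^{(A;\lift{\pi})})=\Phi(\tilde{P}^{(A;\lift{\pi})})$ by the choice of $\X$.

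With this identification in hand, Lemma \ref{lemma:cut} applied to the stochastic matrix $A$ on $\lift{\V}$ with subset $\hat{\X}$ and stationary distribution $\lift{\pi}$ yields
\[
  \sum_{j\notin\hat{\X}}(A^t\dist(0))_j \;\leq\; t\,\Phi(\tilde{P}^{(A;\lift{\pi})}).
\]
Since $j\in\lift{\V}\setminus\hat{\X}$ if and only if $\c(j)\in\V\setminus\X$, the left side equals the marginal mass $(CA^t\dist(0))(\V\setminus\X)$. Combining this with $\pi(\V\setminus\X)\geq 1/2$ and the variational bound $\|\mu-\nu\|_{TV}\geq\mu(Y)-\nu(Y)$ applied at $Y=\V\setminus\X$ gives
\[
  \|CA^t\dist(0)-\pi\|_{TV} \;\geq\; \tfrac{1}{2} - t\,\Phi(\tilde{P}^{(A;\lift{\pi})}),
\]
and requiring the left side to drop to $1/4$ at $t=\tau_M(1/4)$ immediately produces the claimed bound $\tau_M(1/4)\geq 1/(4\Phi(\tilde{P}^{(A;\lift{\pi})}))$.

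The step I expect to be most delicate is the conductance identification, because it must be carried out cleanly without assuming irreducibility of $A$: there may be several stationary distributions $\lift{\pi}$, and one must be careful that the induced chain is built from the same $\lift{\pi}$ used to define the initialization and to apply Lemma \ref{lemma:cut}. All other pieces---existence of the minimizing $\X$, legitimacy of $\tilde{\lift{\pi}}^{(\hat{\X})}$ as a probability distribution on $\lift{\V}$ (guaranteed by $\lift{\pi}(\hat{\X})=\pi(\X)>0$), and the TV lower bound---are routine consequences of the setup.
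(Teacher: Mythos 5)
Your proposal is correct and follows essentially the same route as the paper: the same adversarial initialization $\lift{\pi}^{(\c^{-1}(\X))}$, the same application of Lemma \ref{lemma:cut} to $A$ on the lifted space, and the same identification $\Phi_{\c^{-1}(\X),\lift{\pi}}(A)=\Phi_{\X}(\tilde{P}^{(A;\lift{\pi})})$ via the ergodic flows of the induced chain. The only (cosmetic) difference is that you lower-bound the total variation directly via the variational characterization at the set $\V\setminus\X$, whereas the paper introduces a two-block coarse-graining map $C'$ and the reverse triangle inequality; these are equivalent, and your explicit remark that $(\sfs\sfM)$ forces $C\lift{\pi}=\pi$ is a point the paper leaves implicit.
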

\begin{proof}
Take a subset $\X\subseteq \V$ such that $\pi(\X) = \lift{\pi}(\c^{-1}(\X)) \leq 1/2$ and denote $\lift{\X} = \c^{-1}(\X)$. Define $\lift{\pi}^{(\lift{\X})}$ similarly to $\tilde{\pi}^{(\lift{\X})}$ in Lemma \ref{lemma:cut}, except now we are constructing it on the lifted space $x,\lift{\pi}, \lift{\X} = \c^{-1}(X)$ instead of on $y,\pi,\X$. We then get:
\begin{eqnarray*}
\Vert C A^t \lift{\pi}^{(\lift{\X})} - C \lift{\pi} \Vert_{TV} & = & \frac{1}{2} \sum_{i\in \V}\left\vert \sum_{j\in \c^{-1}(i)}(A^t \lift{\pi}^{(\lift{\X})})_j - \lift{\pi}_j \right\vert \\
& \geq & \tfrac{1}{2} \left\vert \sum_{i\in \X} \sum_{j\in \c^{-1}(i)}(A^t \lift{\pi}^{(\lift{\X})})_j - \lift{\pi}_j \right\vert\\&& + 
\tfrac{1}{2} \left\vert \sum_{i\notin \X} \sum_{j\in \c^{-1}(i)}(A^t \lift{\pi}^{(\lift{\X})})_j - \lift{\pi}_j \right\vert \\
& =: & \Vert C'A^t \lift{\pi}^{(\lift{\X})} - C'\lift{\pi} \Vert_{TV}
\end{eqnarray*}
where we define $C'$ by $C'_{1,v}=1$ if $\c(v) \in \X$, $C'_{0,v}=1$ if $\c(v) \notin \X$, all other $C'_{i,j}=0$. That is, $C'$ projects the distribution $x$ on $\lift{\V}$ onto a distribution among the two options, $\lift{\X}$ and its complementary. With the inverse triangle inequality and using the same notation $\Phi_{\X}$ as in Lemma \ref{lemma:cut}, we further develop:
\begin{eqnarray}\label{eq:1207p2}
\Vert C'A^t \lift{\pi}^{(\lift{\X})} \text{-} C'\lift{\pi} \Vert_{TV} & \geq & \Vert C' \lift{\pi}^{(\lift{\X})} \text{-} C'\lift{\pi}^{(\lift{\X})} \Vert_{TV}\\&& - \Vert C'A^t \lift{\pi}^{(\lift{\X})} \text{-} C'\lift{\pi}^{(\lift{\X})} \Vert_{TV} \\ \nonumber
& = & (\, 1\text{-}\pi(\X) \,) \;\; - \;\; \sum_{j \notin \lift{\X}} (A^t \lift{\pi}^{(\lift{\X})})_j \,\\ \nonumber
& \geq & \tfrac{1}{2} \;\; - \;\; t \Phi_{\lift{\X},\lift{\pi}}(A) \\ &=& \tfrac{1}{2} \;\; - \;\; t \Phi_{\X}(\tilde{P}^{(A;\lift{\pi})}) \; .
\end{eqnarray}
To get the last inequality, for the first term we have used our assumption that $\pi(\X) \leq 1/2$. For the second term we have used Lemma \ref{lemma:cut}, for which the only condition was that $\lift{\pi}(\lift{\X})>0$; the latter holds since $\lift{\pi}(\lift{\X})=\pi(\X)$ and the paper throughout assumes $\pi(i) > 0$ for all $i$. The final equality is obtained by recalling that the induced chain is defined precisely such that $\tilde{P}^{(A;\lift{\pi})}_{i,j} \pi_j = \sum_{u \in \c^{-1}(i),v \in \c^{-1} j} A_{u,v} \lift{\pi}_v$; this readily implies that for the particular pre-image subsets $\lift{\X} = \c^{-1}(\X) \subset \lift{\V}$ we do have $\; \Phi_{\lift{\X},\lift{\pi}}(A) = \Phi_{\X}(\tilde{P}^{(A;\lift{\pi})}) \; .$

If the mixing time $\tau(1/4)$ is equal to $T$, then in particular the initial condition $\lift{\pi}^{(\lift{\X})}$ must converge close enough to $\pi$ within $T$ steps, i.e.~we need
$$ \Vert C A^T \lift{\pi}^{(\lift{\X})} - C \lift{\pi} \Vert_{TV} \leq \tfrac{1}{4} \; .$$
By \eqref{eq:1207p2} this requires $\; \frac{1}{2} \;\; - \;\; T \Phi_{\X}(\tilde{P}^{(A;\lift{\pi})}) \leq 1/4\;$ i.e.~$\;T \geq 1/(4 \Phi_{\X}(\tilde{P}^{(A;\lift{\pi})}))\;$. Since this is true for all $\X \subset \V$ with $\pi(\X) \leq 1/2$, it is in particular true for the $\X$ for which the minimum value of $\Phi_{\X}(\tilde{P}^{(A;\lift{\pi})})$, i.e.~the conductance $\Phi(\tilde{P}^{(A;\lift{\pi})})$, is attained.
\end{proof}
\vspace{3mm}\noindent Note that the above results hold irrespective of having invariance property $(\sfi)$ or $(\sfI)$. 
The following result extends the bound from \cite{chen1999} in the $(\sfs\sfI\sfm\sfr\sfe)$ setting to the $(\sfs)$ and $(\sfs\sfe)$ settings, i.e., allowing for reducible lifts and for situations where we only care about the mixing time $\tau_M$ of the \emph{marginal} distribution.
\begin{thm} \label{cor:conductancecor1}
{The scenarios including $(\sfs)$ satisfy a conductance bound of the form 
$\tau(1/4) \geq \tau_M(1/4)\geq 1/(4\Phi) \;$, or $\; \tau(1/4) \geq \tau_M(1/4) \geq 1/(4\Phi(P))\;$ in scenarios with $(\sfe)$ and ergodic flows specified by $P$.}
\end{thm}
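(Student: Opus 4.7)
The plan is to deduce the theorem almost immediately from Lemma~\ref{lemma:lift-mixing}, by passing from the induced-chain conductance $\Phi(\tilde{P}^{(A;\lift{\pi})})$ to the graph-level conductance $\Phi$ (or $\Phi(P)$ in the $(\sfe)$ case). Two observations frame the argument. First, $\tau(1/4) \geq \tau_M(1/4)$ holds by definition of the two mixing times, so a lower bound on $\tau_M(1/4)$ is also a lower bound on $\tau(1/4)$. Second, for any stationary distribution $\lift{\pi}$ of $A$, Lemma~\ref{lemma:lift-mixing} already delivers $\tau_M(1/4) \geq 1/(4\Phi(\tilde{P}^{(A;\lift{\pi})}))$ under the $(\sfs)$ assumption, and such a $\lift{\pi}$ always exists since $A$ is a finite stochastic matrix.

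The main substantive step will be to show that $\tilde{P}^{(A;\lift{\pi})}$ lies in the class of Markov chains over which $\Phi$ is defined, namely stochastic matrices on $\V$ that respect the locality of $\G$ and have $\pi$ as stationary distribution. I would fix an arbitrary stationary $\lift{\pi}$ of $A$ and exploit $(\sfs)$ to take $\dist(0) = \lift{\pi}$ as a valid initial condition; since $A^t \lift{\pi} = \lift{\pi}$ for all $t$, the marginal $C\lift{\pi}$ is constant in time and, by the marginal-convergence hypothesis, must equal $\pi$. A short computation starting from the defining identity $\tilde{P}^{(A;\lift{\pi})}_{i,j}\,\pi_j = \sum_{\ell \in \c^{-1}(i),\, k \in \c^{-1}(j)} A_{\ell,k}\,\lift{\pi}_k$ then verifies that $\tilde{P}^{(A;\lift{\pi})}$ is stochastic on $\V$, inherits the locality of $\G$ from the locality of $A$ on $\lift{\G}$ (via $\c$), and has $\pi$ as stationary distribution. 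Since $\Phi$ is defined as the maximum of $\Phi(P')$ over exactly this class, $\Phi(\tilde{P}^{(A;\lift{\pi})}) \leq \Phi$, and combining this with Lemma~\ref{lemma:lift-mixing} gives $\tau_M(1/4) \geq 1/(4\Phi)$.

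For the $(\sfe)$ refinement, the ergodic-flow matching requirement forces $\tilde{P}^{(A;\lift{\pi})} = P$ for every stationary $\lift{\pi}$, so $\Phi(\tilde{P}^{(A;\lift{\pi})}) = \Phi(P)$ and Lemma~\ref{lemma:lift-mixing} directly produces $\tau_M(1/4) \geq 1/(4\Phi(P))$, without any maximization. I expect no deep obstacle: once Lemma~\ref{lemma:lift-mixing} is in hand, the theorem is mostly an unpacking of definitions. The only subtlety to watch for is the reducible-lift case $(\sfs\sfR)$, where $A$ may admit a whole simplex of stationary distributions rather than a unique one; but the argument above applies to each such $\lift{\pi}$ separately and each yields the same conclusion, so reducibility poses no difficulty.
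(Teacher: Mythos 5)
Your proposal is correct and follows essentially the same route as the paper: apply Lemma~\ref{lemma:lift-mixing}, observe that the induced chain $\tilde{P}^{(A;\lift{\pi})}$ is an admissible chain on $\G$ so that $\Phi(\tilde{P}^{(A;\lift{\pi})}) \leq \Phi$ (or equals $\Phi(P)$ under $(\sfe)$), and conclude; the paper merely phrases the $(\sfE)$ case as a contradiction rather than a direct comparison. Your explicit verification that $C\lift{\pi}=\pi$ and that the induced chain is stochastic, local, and $\pi$-stationary is a worthwhile detail the paper leaves implicit, but it does not change the argument.
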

\begin{proof}
Lemma \ref{lemma:lift-mixing} applies directly to $(\sfs\sfM\sfe)$, where $(\sfe)$ imposes that $\tilde{P}^{(A;\lift{\pi})}=P$ for all $(A;\lift{\pi})$ and thus $\Phi(\tilde{P}^{(A;\lift{\pi})})=\Phi(P)$. Regarding $(\sfs\sfM\sfE)$, assume that the claim would not hold i.e.~a particular $A,\lift{\pi}$ would allow $\tau_M(1/4) < 1/(4 \Phi)$ with $\Phi$ the largest $\Phi(P)$ among all admissible $P$. This particular lift satisfies, as any other, that $\;\; \tau_M(1/4) \geq \frac{1}{4\Phi(\tilde{P}^{(A;\lift{\pi})})},\;\;$ where the induced chain $\tilde{P}^{(A;\lift{\pi})}$ is by construction an admissible $P$, yielding an admissible conductance $\Phi(P)$ on $\G$. This directly gives a contradiction. {The corresponding scenarios with $(\sfm)$ instead of $(\sfM)$ follow by \eqref{eq:properties1}.}
\end{proof}
\vspace{3mm}

{Theorem \ref{cor:conductancecor1} completes the pictures for the scenarios of type $(\sfs\sfI)$, since for $(\sfs\sfi)$ we already know a stronger bound from Theorem \ref{thm:nolift}. There remains to treat the scenarios with $(\sfS\sfi)$.}

We next show that, when looking at the convergence of the marginals in absence of requirements about irreducibility and ergodic flows, as in $(\sfM\sfR\sfE)$, {one can trade the constraint $(\sfi)$ for the constraint $(\sfs)$} by paying a small price in the mixing time.
\begin{lemma} Let $A$ be a lift in $(\sfS\sfi\sfM\sfR\sfE)$ with mixing time $\tau_M(1/4)=\tau.$  Then there exists another lift in $(\sfs\sfI\sfM\sfR\sfE)$ that has $ \tau_M(1/4)= 2\tau,$ with a lifted graph whose dimension is of order $N\dim(A).$
\end{lemma}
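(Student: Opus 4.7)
The plan is to explicitly construct a new lift $A'$ on a lifted graph $\lift{\G}'$ of dimension of order $N\dim(A)$ whose dynamics simulates a universal ``reset-then-run'' procedure: first the new chain maps any initial distribution $\dist(0)$ to a properly $F$-initialized state on $\lift{\V}$, and then it runs $\tau$ steps of the original $A$. This is precisely the kind of behavior enabled by the change of scenario: dropping invariance $(\sfi)\to(\sfI)$ lets the reset phase temporarily perturb $\pi$, while accepting $(\sfs)$ in place of $(\sfS)$ forces the reset to act uniformly on an arbitrary $\dist(0)\in\PP\siggy{|\lift{\V}'|}$.

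Concretely, I would take $\lift{\V}'$ to combine copies of $\V$ and of $\lift{\V}$ through an auxiliary clock or layered structure (for instance $\lift{\V}'=\V\cup\{0,1,\ldots,\tau\}\times\lift{\V}$), with projection $C'$ equal to the identity on the $\V$-part and to $\c$ on the $\lift{\V}$-part. The transitions of $A'$ would be designed so that from any node in the $\V$-part a single deterministic step applies $F$ and lands in the $0$-clock layer of $\lift{\V}$; within the $\lift{\V}$-part the clock advances with each application of $A$, and once the clock reaches $\tau$ the state is projected back to $\V$ via $C$, closing the cycle. Tracking the two phases then shows that mass initially sitting in $\V$ mixes in $\tau+1$ steps (one $F$-lift followed by $\tau$ applications of $A$) by the mixing hypothesis on $A$ from $Fp$-initialized states, whereas mass initially in $\lift{\V}$ first completes at most one $A$-cycle of length $\tau$, is projected to $\V$, is re-lifted, and then runs $A$ for $\tau$ more steps, for a total of at most $2\tau+\mathcal{O}(1)$ steps before the marginal is within $1/4$ of $\pi$.

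To close the argument I would verify that $A'$ respects the locality of $\lift{\G}'$ (inherited from the locality of $A$, $F$ and $C$), that it belongs to $(\sfs\sfI\sfM\sfR\sfE)$ (every $\dist(0)$ is admissible by construction; $\pi$-invariance is not imposed; only the marginal is measured; the disjoint-union structure makes the lift reducible, as allowed; no ergodic flow is constrained), and that $|\lift{\V}'|=\mathcal{O}(N\dim(A))$. The main obstacle is enforcing the re-initialization uniformly over arbitrary $\dist(0)$ while keeping the dynamics time-homogeneous and local: under pure time invariance one cannot distinguish a ``freshly initialized'' state from a ``still-evolving'' one, so the auxiliary clock/layered product is unavoidable, and this is precisely what produces the $N$-factor blow-up in the dimension of $\lift{\V}'$ relative to $\dim(A)$.
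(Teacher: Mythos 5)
Your architecture is essentially the paper's: a periodic clock construction that runs $A$ for $\tau$ steps, funnels everything back through the initialization map $F$, and repeats. But there is a genuine gap: you never invoke the invariance hypothesis $(\sfi)$ of the given lift $A$, and the argument does not close without it. The construction is necessarily cyclic --- it must keep re-initializing forever so that mass starting at an arbitrary node of the new lifted graph (as $(\sfs)$ demands) eventually gets a full, properly $F$-initialized run of $A$. But then mass that has \emph{already} mixed is repeatedly re-injected as $F\marg'$ and pushed through $C A^t F$ for $0 \le t < \tau$ in every subsequent cycle. The mixing hypothesis on $A$ only controls $\Vert C A^t F \marg' - \pi\Vert_{TV}$ for $t \ge \tau$; for intermediate $t$ nothing in your argument prevents the marginal from drifting far from $\pi$ in the interior of a cycle. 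Since the definition of $\tau_M(1/4)$ requires the $1/4$ bound for \emph{all} $t \ge 2\tau$ --- and since at any fixed time the components of a generic $\dist(0)$ sit at different clock phases, so some component is always mid-cycle --- your "total of at most $2\tau+\mathcal{O}(1)$ steps before the marginal is within $1/4$" establishes neither the uniform-in-$t$ statement nor even the bound at a single time for convex combinations. The paper closes exactly this hole: invariance $(\sfi)$ gives $C A^t F \pi = \pi$ for all $t$, whence $\Vert C A^t F \marg' - \pi\Vert_{TV} = \Vert C A^t F \marg' - C A^t F \pi\Vert_{TV} \le \Vert \marg' - \pi\Vert_{TV} \le 1/4$ by $\ell_1$-contraction of the stochastic maps involved. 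This is the entire mechanism by which $(\sfi)$ is "traded away," so the omission is not cosmetic; it is the heart of the lemma.

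Two secondary points. First, your cycle closure uses two idle layers (project to $\V$ via $C$, then re-lift via $F$), and both transitions fix the base node, i.e.\ they are self-loops of $\G$, which need not be edges; the paper instead identifies the top clock layer with the bottom one so that the last genuine step of $A$ simultaneously resets the clock. Second, your node count is $N + (\tau+1)\dim(A)$, which is of order $\tau\dim(A)$ rather than $N\dim(A)$; this is a bookkeeping issue you should at least flag.
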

\begin{proof}
Consider the given lift $A$ in $(\sfS\sfi\sfM\sfR\sfE)$: we can construct the \textit{periodic node-clock lift} (see Appendix \ref{ssec:tools}) that, modulo proper initialization, first follows $A$'s evolution from $t=0$ to $t=T=\tau_M(1/4)$, and then periodically repeats this evolution. The proper initialization $F$ associates the weight $\marg_i$ to the lifted node $(t,v_0,v)=(0,i,i)$. To see how this {allows to trade $(\sfi)$ for $(\sfs)$}, we examine the evolution of an initial state $(t_i,v_0,v)$ of this periodic node-clock lift (equivalent to considering a Kronecker delta distribution as initial disctribution $\dist(0)$). After $T-t_i$ steps, each such state is mapped to the set of nodes $\mathcal{F}_0 := \{(0,v,v)\}$. From there, it follows the periodic evolution generated by $A$. 
This implies that after $T=\tau_M(1/4)$ more steps, i.e.~at $t=t_* = 2 T-t_i$, for any $v_0,v$ defining the initial marginal $\dist(0)$, the distribution $\dist(t_*)$ satisfies $\Vert C \dist(t_*) - \pi \Vert_{TV} \leq 1/4$ and has support on the image of the initialization map $F$. By construction we can thus write $\dist(t_*) = F \marg (t_*)$ for some distribution $\marg(t_*)$ over $\V$. For the next $T$ steps, we thus have $\dist(t_*+t) = A^t F \marg(t_*)$. Invariance $(\sfI)$ of $\pi$ under $A$ 
 means $C A^t F \pi = C F \pi = \pi$. Therefore we can write:
\begin{eqnarray*}
\Vert C \dist(t_*+t)- \pi \Vert_{TV} & = & \Vert C A^t F \marg (t_*) - C A^t F \pi \Vert_{TV} \;\\ &\leq& \; \Vert A^t F \marg (t_*) - A^t F \pi \Vert_{TV}\\
& \leq  & \Vert F \marg (t_*) - F \pi \Vert_{TV} \\&=& \Vert \marg (t_*)-\pi \Vert_{TV} \; .
\end{eqnarray*}
The second inequality holds because the stochastic matrix $A$ contracts the $\ell_1$ norm. The last equality holds since the $F$ associated to our bridge is just a relabeling of nodes, from $\V$ to a subset of $\lift{\V}$. Since we had $\Vert C \dist(t_*) - \pi \Vert_{TV} \leq 1/4$ already, we have just shown that $\Vert C \dist(t) - \pi \Vert_{TV} \leq 1/4$ for all $t \geq t_*$, when we start with all the weight of $\dist(0)$ concentrated on a single node $\in \lift{\V}$.

For an arbitrary $\dist(0)$, the state after $2 T$ steps is a convex combination of cases with $\dist _i(0)=1$, so the property $\Vert C \dist(t) - \pi \Vert_{TV} \leq 1/4$ for all $t \geq 2 T \geq t_*$ is maintained. The periodic node-clock-lift is thus a chain in $(\sfs\sfI\sfM\sfR\sfE)$, whose $\tau_M(1/4)$ mixing time is at most twice the one of the chain $A$ in $(\sfS\sfi\sfM\sfR\sfE)$.
\end{proof}

{{\em Remark:} The lift constructed in the proof (potentially) loses invariance. This is the case because the periodic node-clock-lift is built to follow $A$, and thus leave $\pi$ invariant, only if initialized in the set $\mathcal{F}_0 := \{(0,v,v)\}$; as soon as in trading $(\sfS)$ towards $(\sfs)$ we relax this initialization and allow starting at nodes $(t_i,v_0,v)$ with $t_i\neq 0$, we have no guarantee anymore about what happens when $C x(0) = \pi$. In fact, we know from Theorem \ref{thm:nolift} that no lift satisfying $(\sfs\sfi)$ can be faster than the best non-lifted Markov chain, so for graphs where the periodic node-clock-lift construction keeps invariance, also $(\sfS\sfi\sfM\sfR\sfE)$ would not allow to go faster than the (possibly already very good) fastest non-lifted Markov chain. We also note that through the periodic node-clock-lift construction, we could lose any matching ergodic flows that were present in $A$, so scenarios with $(\sfS\sfi\sfe)$ are not covered by this result.}

\begin{thm}\label{cor:lift-mixing-time}
Settings with $\;(\sfS\sfi\sfE)\;$ satisfy $\;\tau(1/4) \geq \tau_M(1/4) \geq 1/(8\Phi)\;$
\end{thm}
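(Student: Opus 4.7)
The proof plan is essentially to chain together the two preceding results: the previous Lemma, which converts any lift in $(\sfS\sfi\sfM\sfR\sfE)$ into a lift in $(\sfs\sfI\sfM\sfR\sfE)$ while at most doubling the marginal mixing time, and Theorem \ref{cor:conductancecor1}, which gives a $1/(4\Phi)$ conductance lower bound in any scenario containing $(\sfs)$. The factor $8$ in the statement is exactly $2 \times 4$, so the bookkeeping should work out cleanly.

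More precisely, by the monotonicity relations \eqref{eq:properties1}, every lift admissible in any scenario of type $(\sfS\sfi\sfE)$ is also admissible in $(\sfS\sfi\sfM\sfR\sfE)$, so it suffices to establish the bound for this least constrained member of the family. I would then argue by contradiction: suppose some $A$ in $(\sfS\sfi\sfM\sfR\sfE)$ achieves $\tau_M(1/4) < 1/(8\Phi)$. The previous Lemma applied to $A$ yields a periodic node-clock lift $A'$ which lives in $(\sfs\sfI\sfM\sfR\sfE)$ and has $\tau_M(1/4)_{A'} \leq 2\,\tau_M(1/4)_A < 1/(4\Phi)$. But Theorem \ref{cor:conductancecor1} precisely forbids this: any lift in $(\sfs)$ must have $\tau_M(1/4) \geq 1/(4\Phi)$. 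Contradiction, so $\tau_M(1/4)_A \geq 1/(8\Phi)$. The inequality $\tau(1/4) \geq \tau_M(1/4)$ is of course immediate from the definitions.

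The one subtlety I would double-check concerns the handling of ergodic flows: the periodic node-clock-lift construction in the previous Lemma is explicitly noted to potentially destroy any matching ergodic flows, which is why the Lemma is only stated under $(\sfE)$. Since the current theorem also assumes $(\sfE)$, this is not an obstacle here. Were one attempting the analogous statement with $(\sfe)$, this step would collapse, and indeed the text has already flagged $(\sfS\sfi\sfe)$ as falling outside the scope of the Lemma. Beyond this, the argument is essentially a two-line chaining exercise and no further ideas are required.
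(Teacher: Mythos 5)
Your proposal is correct and follows essentially the same route as the paper: chain the previous Lemma (doubling the marginal mixing time while converting $(\sfS\sfi\sfM\sfR\sfE)$ into $(\sfs\sfI\sfM\sfR\sfE)$) with the $1/(4\Phi)$ bound of Theorem \ref{cor:conductancecor1}, then extend to all of $(\sfS\sfi\sfE)$ by monotonicity of the scenarios. Your remark about why the argument cannot be strengthened to $(\sfe)$ matches the paper's own caveat immediately following its proof.
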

\begin{proof}
Since we have shown in Theorem \ref{cor:conductancecor1} that $\tau_M(1/4) \geq 1/(4\Phi)$ for $(\sfs\sfI\sfM\sfR\sfE)$, by Lemma 3 we have $\tau_M(1/4) \geq 1/(8\Phi)$ for $(\sfS\sfi\sfM\sfR\sfE)$. The same bound then holds for all scenarios with $(\sfS\sfi\sfE)$, since they are all more constrained than $(\sfS\sfi\sfM\sfR\sfE)$ and with the same conductance.
\end{proof}

We cannot strengthen $(\sfE)$ to $(\sfe)$ directly in the above proof, since this would affect both sides of the inequality: the allowed lifts, and the allowed $P$ for computing the conductance.
We thus conclude our characterization by treating the scenarios of type $(\sfS\sfi\sfe)$ on their own.
In fact, we would argue that $(\sfS\sfi\sfe)$ is perhaps the most natural scenario: a tailored initialization is allowed, invariance of the target under such initialization is required, and matching ergodic flows are imposed; irreducibility or mixing on the full distribution may be imposed or not, depending on the application. Hence, while the following discussion may appear technical and somewhat incremental, as it hinges on details, it might actually be the most important towards applications. 

{We first illustrate with an example that acceleration via lifting is indeed possible within this scenario.\vspace{2mm}

\noindent \textbf{Example 3 (\emph{Cycle on 4 nodes}):} Consider the cycle on 4 nodes, $\V=\{0,1,2,3\}$, with imposed transition probabilities $P_{0,1} = P_{1,0} = P_{2,3} = P_{3,2} = (1-\phi)\delta$, $P_{0,3}=P_{3,0}=P_{2,1}=P_{1,2}= (1-\phi)(1-\delta)$, and $P(i,i)=\phi$ for all $i \in \V$.\footnote{The self-loops are really not essential in general, they just allow to consider a simpler graph, which without self-loops would have a periodicity problem.} The corresponding stationary $\pi$ is uniform and the conductance is $\Phi(P)=(1-\phi)\cdot\min(\delta,1-\delta)$. We will show that a lifted walk satisfying $(\sfS\sfi\sfe)$, with ergodic flows as imposed, can beat the associated conductance bound, by an arbitrarily large amount if $\delta$ is taken correspondingly close to $0$ or $1$.

To this end, we construct the following directed clock-lift on $\lift{\V} = \{(s,v) \text{ with } s \in \{0,1,2\} \text{ and } v \in \V\}$, see Fig.~\ref{fig:4cycle}.
\begin{itemize}
\item Each node $(0,v) \in \lift{\V}$ sends its weight, split evenly, to the neighbors $(1,v\pm1\text{mod}4)$, i.e.$$A_{(1,v+1\text{mod}4),(0,v)}=A_{(1,v-1\text{mod}4),(0,v)}=1/2 \; ;$$
\item Each node $(1,v)$ sends half its weight to $(2,v)$, and a quarter respectively to $(2,v\pm1\text{mod}4)$:
\begin{eqnarray*}
A_{(2,v),(1,v)} &=& 1/2 \quad \text{and}\\
 A_{(2,v+1\text{mod}4),(1,v)} & =& A_{(2,v-1\text{mod}4),(1,v)} = 1/4 \; ;
 \end{eqnarray*}
\item Each node $(2,v)$ undergoes a probabilistic superposition of two different behaviors: with probability $\gamma \ll 1$, it sends all its weight back to $(0,v),$ on the first level; or, with probability $1-\gamma$, it evolves accordingly to a Markov chain similar to $P$ on the third level of the lift, namely on the subset of nodes $\mathcal{F}_2 := \{(s,v) : s=2\}$. More precisely, we choose:
\begin{eqnarray*}
A_{(2,v),(0,v)} &=& \gamma\; \text{ for all } \;v \in \V, \\
 A_{(2,0),(2,1)} &=& A_{(2,1),(2,0)}= A_{(2,2),(2,3)} = A_{(2,3),(2,2)}  \\
& =& (1-\gamma) \epsilon, \\
 A_{(2,0),(2,3)} &=& A_{(2,3),(2,0)} = A_{(2,2),(2,1)} = A_{(2,1),(2,2)} \\
& =& (1-\gamma)(1-\epsilon),\; \text{ for some}\;\epsilon >0.
\end{eqnarray*}
\end{itemize}
Next, we will jointly tune $\gamma,\epsilon,\phi$ such that the ergodic flows of $A$ match those of $P$.
\begin{figure}[t]
\hspace{-3mm}
\includegraphics[width=1\columnwidth]{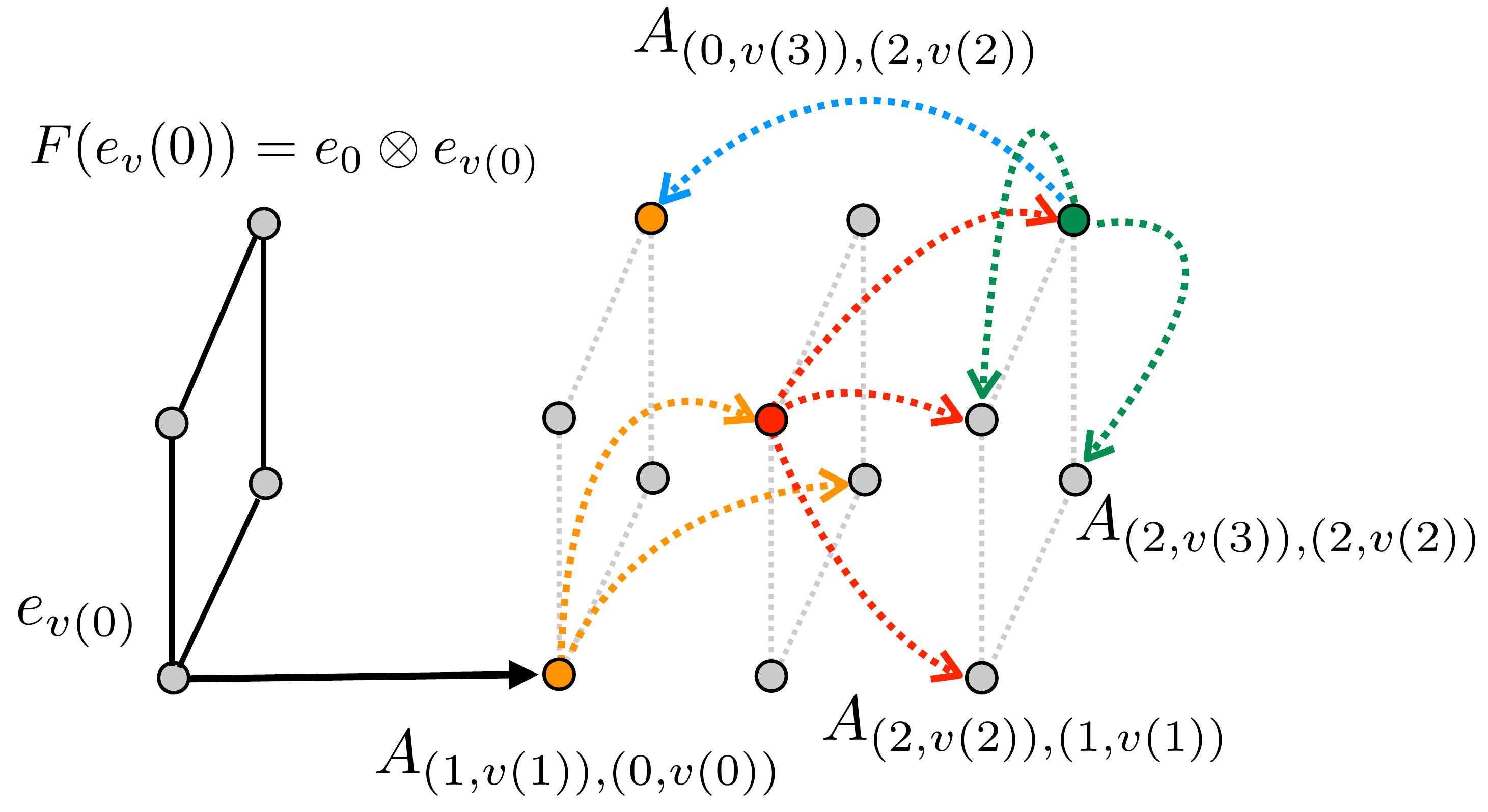}
	\caption{The node-clock lift variation for the cycle on 4 nodes, which ensures irreducibility on the lifted space. Allowed transitions for a node on each level of the lifted graph are highlighted with different colors. Notice how the last level can ``jump back".}  
\label{fig:4cycle}
\end{figure}

If we initialize the lift with $F$ in the subspace $\mathcal{F}_0 := \{(s,v) : s=0\}$, as allowed by $(\sfS)$, then $\marg=C x$ converges in two time steps exactly to $\pi$, the uniform distribution over $\V$. Moreover, property $(\sfi)$ is satisfied by symmetry and we have in fact $C \dist(t) = \pi$ for all $t\geq 2$. The lift is irreducible. To cover all scenarios in $(\sfS\sfi\sfe)$, before we check $(\sfe)$ there remains to cover $(\sfm)$, i.e.~check the convergence of $\dist(t)$ itself towards its steady state. Towards this, we note that after $t\geq 2$, i.e.~when $Cx$ has converged to $\pi$, the only remaining dynamics of $\dist(t)$ concerns the $s$ part of the lifted nodes $(s,v)$. It corresponds to a Markov chain on $\V' = \{s \in 0,1,2 \}$ and with 
$$ P' = [\; 0 \;,\; 0 \;,\; \gamma \; ; \; 1 \;,\; 0 \;,\; 0 \; ; 0 \;,\; 1 \;,\; (1-\gamma) \; ] \; .$$
This $P'$ has a steady state $[\; \frac{\gamma}{1+2\gamma} \;,\; \frac{\gamma}{1+2\gamma} \frac{1}{1+2\gamma} \;]$, and for $\gamma\ll 1$ it converges very close to this steady state in just two steps, such that $\Vert \dist(t) - \lift{\pi} \Vert_{TV} < \tfrac{1}{4}$ for all $t \geq 2$ for any $\dist(0) \in \mathcal{F}_0$. This beats $1/\Phi(P)$ by a factor $1/(2\delta)$ which can be very large for $\delta \ll 1$. 

Finally, to satisfy ($\sfe$), we must tune $\gamma,\epsilon,\phi$ to indeed make the ergodic flows of $A$ and $P$ match. From
$$\lift{\pi} = [\; \frac{\gamma}{1+2\gamma} \;,\; \frac{\gamma}{1+2\gamma} \frac{1}{1+2\gamma} \;] \otimes [\; \tfrac{1}{4} \;,\; \tfrac{1}{4} \;,\; \tfrac{1}{4} \;,\; \tfrac{1}{4} \;]$$
we directly compute $\lift{Q}^{(A;\lift{\pi})}_{\c^{-1}(i),\c^{-1}(j)}$. Identification with $Q=P/4$ yields the constraints
$$\phi = \frac{3\gamma/2}{1+2\gamma} \quad , \quad (1-2\epsilon) = \frac{1+\gamma/2}{1-\gamma}(1-2\delta)$$
for which we can clearly find satisfactory $\phi,\epsilon \in (0,1)$ for any arbitrarily small $\delta$, provided we take $\gamma$ sufficiently small such that the right hand side of the last constraint lies below $1$.

The principle of the above construction is that most of the steady state distribution is concentrated on the nodes $(s,v) \in \lift{\V}$ with $s=2$, where with high probability $(1-\gamma)$ the walk has a similarly bad behavior as $P$ itself; however when starting in the initialization set $\mathcal{F}_0$, a completely different behavior dominates and yields fast mixing on $\V$ through the first 2 time steps. Without allowing initialization in $\mathcal{F}_0$ exclusively, i.e.~in scenarios with $(\sfs)$, this advantage would drop. \hfill $\square$
\vspace{2mm}}

The above example shows that when ergodic flows are imposed, the lift can nevertheless significantly beat the conductance bound associated to these ergodic flows. However, we next show that for scenarios $(\sfS\sfi\sfe)$, the lifts can never beat the conductance bound associated to $\Phi$, the highest conductance over all possible $P$ on $\G$. In this sense, in $(\sfS\sfi\sfe)$ scenarios the lifts serve as a way to circumvent the limitations imposed by (overly-constraining) ergodic flows.

\begin{thm}\label{thm:last}
Any lifted Markov chain satisfying $(\sfS\sfi\sfe)$, has its mixing time bounded by $\tau(1/4) \geq \tau_M(1/4) \geq 1/(8\Phi)$, the conductance bound associated to the \emph{graph} $\G$.
\end{thm}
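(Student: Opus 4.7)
The plan is to invoke Theorem \ref{cor:lift-mixing-time} essentially as a corollary. The key observation is that imposing $(\sfe)$ (matching specific ergodic flows dictated by a reference chain $P$) is a strict additional restriction on top of $(\sfE)$: every admissible lift in $(\sfS\sfi\sfe)$ is automatically admissible in $(\sfS\sfi\sfE)$, in the sense of \eqref{eq:properties1}. Since the quantity $\Phi$ on the right-hand side of Theorem \ref{cor:lift-mixing-time} is by definition the maximum conductance $\Phi(P)$ over all stochastic $P$ compatible with the locality of $\G$ and stationary distribution $\pi$ --- and in particular is not indexed by any specific reference chain --- the bound $\tau_M(1/4) \geq 1/(8\Phi)$ transfers verbatim to the narrower $(\sfS\sfi\sfe)$ class.

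For a fully self-contained argument I would retrace the chain used for Theorem \ref{cor:lift-mixing-time}. Starting from a lift $A$ in $(\sfS\sfi\sfe)$ with marginal mixing time $\tau$, apply the periodic node-clock-lift of Lemma 3 to build a lift $\tilde{A}$ whose marginal mixing time is at most $2\tau$ and which sits in $(\sfs\sfI\sfM\sfR\sfE)$. Then invoke Theorem \ref{cor:conductancecor1} on $\tilde{A}$ to deduce $2\tau \geq 1/(4\Phi)$, which rearranges into the desired $\tau \geq 1/(8\Phi)$.

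The one point to articulate carefully --- and precisely the concern flagged in the paragraph preceding the theorem statement --- is that the Lemma 3 construction need not carry the ergodic-flow matching property $(\sfe)$ of $A$ over to $\tilde{A}$. This appears, at first sight, to be an obstacle, since the transformed lift no longer belongs to $(\sfs\sfI\sfe)$. However, the loss is harmless for our target: the bound we are after is stated in terms of the graph conductance $\Phi$, which is independent of any reference $P$, so we are free to apply the weaker $(\sfs\sfE)$-form of Theorem \ref{cor:conductancecor1} rather than its sharper $(\sfs\sfe)$-form. The would-be difficulty of extending the Theorem \ref{cor:lift-mixing-time} argument to $(\sfS\sfi\sfe)$ dissolves because the very same $\Phi$ appears on both sides of the implication chain; the only scenario where Lemma 3's failure to preserve $(\sfe)$ would really bite is if we tried to prove the sharper $\tau_M(1/4) \geq 1/(8\Phi(P))$ bound, which Example 3 shows is in fact false in general.
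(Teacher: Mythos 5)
Your argument is correct and is essentially the paper's own proof: the containment $(\sfS\sfi\sfe)\subseteq(\sfS\sfi\sfE)$ lets the bound of Theorem \ref{cor:lift-mixing-time} apply verbatim, since the graph conductance $\Phi$ does not depend on the reference chain $P$. Your additional unpacking of why Lemma 3's failure to preserve $(\sfe)$ is harmless here (and would only matter for a hypothetical $1/(8\Phi(P))$ bound, which Example 3 refutes) matches the paper's own remarks exactly.
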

\begin{proof}
This follows from Theorem \ref{cor:lift-mixing-time}. Indeed, assume the opposite i.e.~that we can make $\tau_M(1/4)$ arbitrarily smaller than $1/(8\Phi)$ by choosing an appropriate lift. This particular lift $A$ of course remains a valid option when relaxing the ergodic flow constraint, i.e.~for the corresponding setting in $(\sfS\sfi\sfE)$. Therefore we would also have $\tau(1/4) \leq 1/(8\Phi)$ for $(\sfS\sfi\sfE)$ which is a contradiction with Theorem \ref{cor:lift-mixing-time}.
\end{proof}

The situation described by Example 3 and Theorem \ref{thm:last} is now quite clear. The more constrained scenarios $(\sfS\sfi\sfe)$ cannot go faster than those in $(\sfS\sfi\sfE)$. However, while imposing ergodic flows may directly constrain $P$ to be a slow mixer, these do not pose a hard limit on the mixing time of a lifted Markov chain when a suitable initialization is allowed. 
Example 3 above is meant to show that a speedup within this scenario is indeed possible. Its relatively simple structure suggests that there may be other and more interesting constructions with which a lift can beat the mixing time bound associated to $\Phi(P)$.
Imposing ergodic flows ($\sfe$) is rather standard in the lifted walks literature \cite{chen1999}. Our last result thus indicates that its combination with ($\sfS$) must be treated with caution. By comparing the last result with Theorem \ref{cor:conductancecor1}, it becomes apparent that trading $(\sfs)$ for $(\sfi)$ does have an effect when $(\sfe)$ is also part of the scenario. 


\section{Summary and Perspective}\label{sec:conclusion}

We have provided an extensive classification of scenarios in which lifted Markov chains can take place. It is shown that the limits and opportunities of lifts feature a subtle yet clear dependency on the scenario in which they take place. Our main occupation was to investigate the validity of conductance-like bounds on the mixing time. We have investigated 5 reasonable constraints which can be imposed on a lift, and shown that the occurrence of a conductance bound can be attributed to merely two of these. Either of the following properties of the lift imposes a conductance bound: $(\sfs)$ - impossibility of locally initializing the lift, or $(\sfi)$ - demanding invariance of the target distribution. 
We further invigorate this claim by proving that requiring {\em neither $(\sfs)$ nor $(\sfi)$} allows to mix in diameter time, so no conductance bound can be found, whereas \textit{requiring both $(\sfs)$ and $(\sfi)$} is too restrictive - a lift in this scenario can not accelerate the mixing time as compared to a non-lifted Markov chain. These results are summarized in Table \ref{tab:1}.

The main message of this analysis is that one should be careful with specifying scenarios for accelerated mixing. The relevance of imposing ergodic flows for instance, becomes rather questionable when one allows for algorithm initialization. Properly identifying such properties gains particular importance when lifted Markov chains are to be compared to other acceleration strategies, like the discrete-time quantum walks \cite{kempe2003} which we are addressing in a forthcoming paper.

Our proofs are constructive and thus also indicate the potential in the use of lifted Markov chains to speed up mixing. However, as in \cite{chen1999}, from an algorithmic viewpoint the value of the processes we construct is more existential than practical. Our proof of Theorem \ref{thm:diameter} for instance heavily builds on a non-distributed, extensive optimization of the edge weights 
and, as such, it is certainly not a viable option for e.g.~Markov chains used for Monte-Carlo sampling in large systems. In the light of this, a first and most interesting open problem regards the development of heuristics or suboptimal versions of our algorithms that would use only local information, in a way that is suitable for distributed implementation. To the best of our knowledge, except for particular graphs exhibiting  strong symmetries, a general way to build such a mixing process remains an open issue for Markov chains, consensus algorithms, and quantum walks. 
\\

\begin{table}[htp]
\begin{center}
\begin{tabular}{|c|c|}
\hline {\bf Scenarios} & {\bf Bound on mixing times} $\tau(1/4)$ and $\tau_M(1/4)$\\
\hline($\sfs\sfi$) & no advantage over non-lifted \\
\hline($\sfs$) & $\geq 1/(4\Phi)$ \\
($\sfs\sfe$) & $\geq 1/(4\Phi(P))$ \\
\hline($\sfi$) & $\geq 1/(8\Phi)$ \\
($\sfi\sfe$) & $\geq 1/(8\Phi)$ [NB: not $\Phi(P)$]\\
\hline$(\sfS\sfI)\setminus(\sfS\sfI\sfr\sfe)$ & $\leq D_\G+1$\\
($\sfS\sfI\sfr\sfe_\delta$) & $\leq D_\G+1$\\
\hline
\end{tabular}
\end{center}
\caption{}\label{tab:1}
\end{table}


\bibliographystyle{IEEEtran}
\bibliography{biblio}


\appendix
\subsection{Linear representation of Markov process on graphs and their products}\label{app:linear}
In describing Markov chains on graphs, building their lifts and analyzing their performance by resorting to system-theoretic ideas, it is particularly convenient to use a matrix representation. Consider a graph $\G=(\V,\E)$, with $\V=\{1,\ldots,N\}$ and the associated real vector space $\RR^N.$
We will use the notation $e_i$ to canonical basis vectors of $\RR^N,$ namely vectors with all elements zero except its $i$'th element equal to one; this denotes a probability distribution on $\G$ with all weight on node $i \in \V$. The notation $e_i$ will be used more generally for elementary vectors whose dimension is clear by the context.

A real function $f$ on $\V$ can then be associated to a vector, which we still denote $f$ with some abuse of notation, $f=\sum_if(i)e_i.$ The value of the function on $i$ is then obtained as $\langle e_i,f\rangle=e_i^\adj f,$ where $\adj$ indicates the transpose (adjoint) of a vector or matrix.
In particular, we associate in this way vectors to probability distributions on $\V.$ The set of the resulting vectors, which we call probability vectors, is denoted by $\PP\siggy{N}$ and its elements have non-negative entries which sum to one.

A Markov chain on the graph is a Markov discrete-time stochastic process $\{v(t)\}_{t\geq 0}$ on the node space, with conditional probabilities $\text{Proba}\big(v(t+1)=i|v(t)=j\big)=P_{i,j}$, and $P_{i,j}\neq 0$ only if $(i,j)$ is an edge of the graph i.e.~$(i,j) \in \E$.
If $\marg(t)$ is the probability vector associated to the distribution of state of the Markov chain at time $t,$ then its evolution is generated by its one-step transition matrix $P=(P_{i,j}),$ via:
\[p(t+1)=P\, p(t).\]
In order for $p(t+1)$ to be a probability vector we need $P$ to be a column-stochastic matrix, i.e. $\sum_j P_{i,j}=1$ for all $i$. Notice that in probability theory, what we call $P$ is often called $P^\adj$. Our convention avoids cumbersome notations in the definitions of the lifted chains and when we take powers of the transition maps.

From $\G_1=(\V_1,\E_1)$ and $\G_2=(\V_2,\E_2)$ two graphs with node sets of cardinality $N_1$ and $N_2,$ respectively, we can construct a graph on the cartesian product $\V=\V_1\times \V_2,$ whose nodes are pairs $(i,j),$ with $i\in\V_1$ and $j\in\V_2.$  The edges $\E$ will be all the quadruples $((i,j),(k,\ell))$ such that $(i,k)\in\E_1$ and $(j,\ell)\in\E_2.$
If $e_{1,i},e_{2,j}$ are the elementary vectors associated to $i\in\V_1$ and $j\in\V_2$, respectively, we must associate to the corresponding product node $(i,j)$ the Kronecker product  vector $e_{1,i}\otimes e_{2,j}$ (see e.g. \cite{horn2012} for more details). The latter form a basis for the real space associated to $\V,$ that is $\RR^{N_1N_2}=\RR^{N_1}\otimes \RR^{N_2}.$ A time-homogeneous Markov chain on the product graph is defined by specifying a $N_1N_2\times N_1N_2$ stochastic matrix $A$, with nonzero elements only in positions corresponding to pairs of nodes connected by an edge.

\subsection{Basic lift constructions}\label{ssec:tools}

\noindent The following basic lift constructions are based on product graphs and are used in the proof of the main results. 

\vspace{3mm}\subsubsection{Clock lift} The following construction, which we call a \textit{clock-lift}, allows us to construct a \emph{time-homogeneous} lifted chain, whose marginal follows the evolution of some specified \emph{time-inhomogeneous} Markov chain represented by a finite sequence of $T$ stochastic matrices $P(0),\ldots,P(T-1)$. This is attained by including the time variable in the node space, in a way that is reminiscent of the inclusion of time as state variable in a dynamical system, in order to make it time-invariant.

Explicitly, consider the product graph of the original graph $\G=(\V,\E)$ with the path graph associated to the time interval $[0,T].$ The latter thus has node set $\{0,1,...,T\}$ and edges $(t,t+1)$ for $t=0,1,...,T-1$. The product graph produces a lift that effectively introduces $T$ copies of each original node, indexed by time. 
The basic idea is depicted in Figure \ref{fig:clocklift} for $G$ a path graph of 4 nodes. As a lifted node space we thus consider $\lift{\V} = \{ (t,v) : t \in \{0,1,...,T\} \text{ and } v \in \V \}$, and as edges the ones of the product graph. If we consider the surjective map $\c:\lift{\V}\to\V$ defined as $\c((t,v))=v$, it is easy to see that the product graph is a valid lift of $\G.$ We then construct a lifted Markov chain on $\lift \G$ by choosing a $\lift\G$-local, time-inhomogeneous stochastic matrix on $\RR^{T+1}\otimes\RR^N$ of the form:
 	\[ A = \sum_{t=1}^T e_te_{t-1}^\adj \otimes P(t) + e_Te_T^\adj \otimes I_\V, \]
where $e_t$ are the elementary vectors in $\RR^{T+1}$; $v^\adj$ denotes the adjoint (row vector) of $v$ (column vector); $\otimes$ denotes the Kronecker product as above, 
 such that $B \otimes P$ acts on vectors of $\RR^{(T+1)|\V|}= \RR^{\lift{\V}}$ when $B$ acts on $\RR^{(T+1)}$ and $P$ on $\RR^{|\V|}$; and $I_\V$ is the identity on $\RR^{|\V|}$. This $A$ produces the target time-inhomogeneous evolution of the marginals, when it is initialized (as in the description of the ($\sfS$) scenarios) using the linear map :
$$F : \marg (0) \mapsto \dist(0) = e_0\otimes \marg (0) \, .$$
The evolution of the lifted distribution then follows
$$ \dist(t) = A^t\dist(0) = e_t\otimes P(t)P(t-1)\dots P(1)\marg (0) \, , $$
so that $C\dist(t) = P(t)P(t-1)\dots P(1)\marg (0) = \marg (t)$ for all $0\leq t\leq T$ and $C\dist(t) = \marg (T)$ for all $t\geq T$.\vspace{2mm}

A \textit{periodic clock-lift} is a variant of the clock-lift where the product graph is constructed by replacing the path in time by a cycle, obtained by connecting the time-index $e_{T-1}$ back to $e_0$. The corresponding evolution is now:
 	\[ A = \sum_{t=1}^{T-1} e_te_{t-1}^\adj \otimes P(t) \;+\; e_0e_{T-1}^\adj \otimes P(T) \, . \]
For an initial state $\dist(0)=e_0\otimes \marg (0)$, the output $\marg (t)$ is then given by periodically applying the time-varying Markov chain transitions $\{P(t)\}$. This makes the evolution, when it starts at an arbitrary lifted node $(t,v) \in \lift{\V}$ (as must be allowed in scenarios with $(\sfs)$), to undergo  $T-t$  steps towards some $C \dist(T-t) =: \marg'(0)$, and after that it follows the same behavior as with the well-understood initialization $x'(0) = e_0 \otimes \marg'(0)$.
Moreover, unlike the clock-lift, the periodic clock-lift can be irreducible.
At the same time, it implies that the sequence $\,P(T)P(T-1)\dots P(1) \,$ is applied periodically, which might imply more complicated dynamics than the simpler non-periodic clock-lift.

\begin{figure}[t]
\hspace{-3mm}
\includegraphics[width=1\columnwidth]
{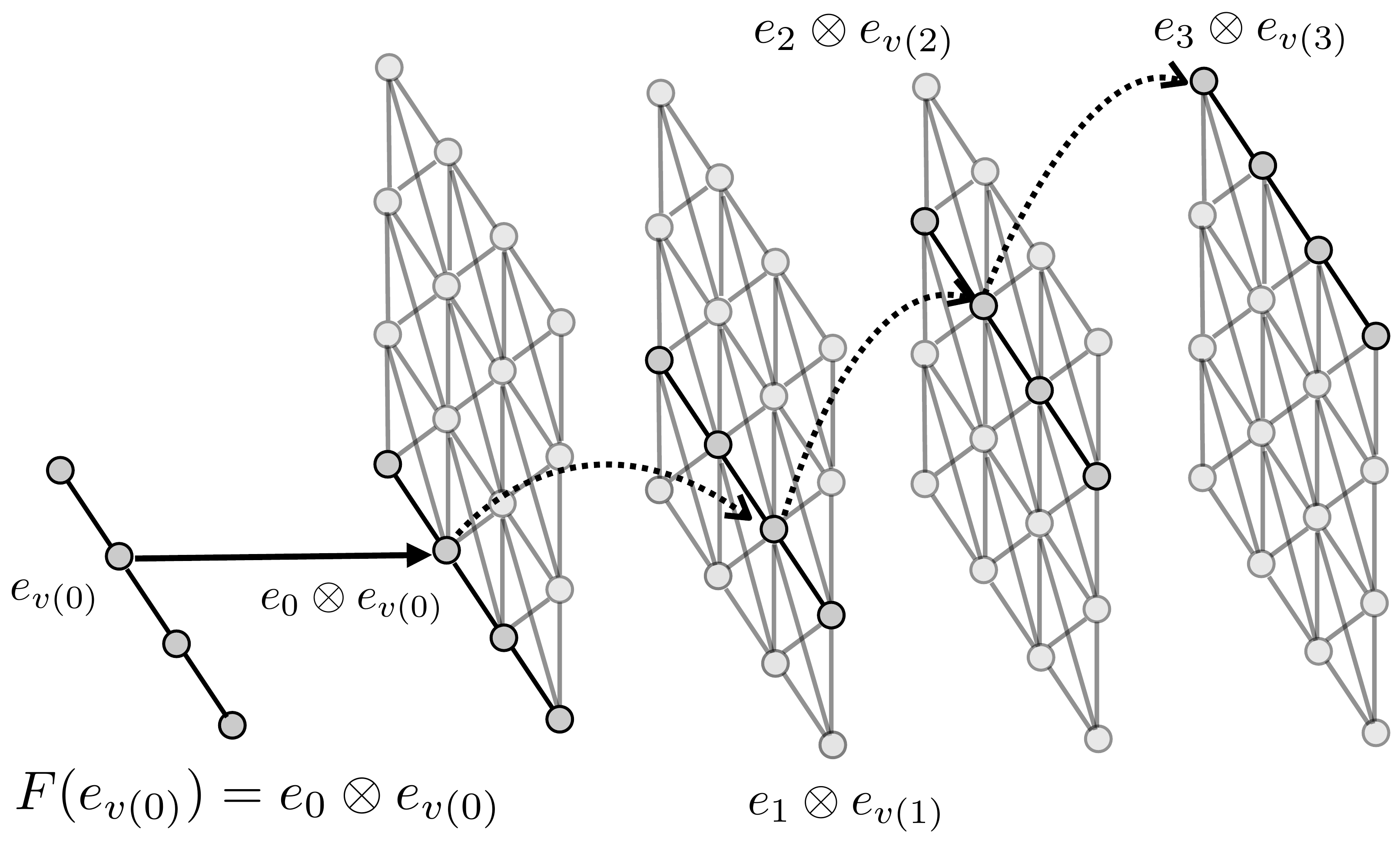}
	\caption{The initial graph $\G$ (here a path on 4 nodes) and its ``clock-lift'' enabling to enact any prescribed evolution $\marg (t) = P(t)P(t-1)\dots P(1)\marg (0)$ for all $t\leq T$, provided $\dist(0)$ is initialized with the prescribed linear $F$.}  
	\label{fig:clocklift}
\end{figure}



\vspace{3mm}\subsubsection{Node-clock lift} 
Assume that for each extreme initial state $\marg (0) = e_i$, we have built target stochastic evolutions $\marg ^{(i)}(t) = P^{(i)}(t) P^{(i)}(t-1)...P^{(i)}(1) \, e_i$, with all the sequences $\{P^{(i)}(k)\}$ satisfying the locality constraints of $\G.$ The motivation for this assumption is that such sequences can be built to attain the desired target distribution $\pi$ in minimal finite time, independently for each known initial state $e_i$ with $i=1,2,...,N$ (see next point of this Appendix). {We would then like to exploit these $N$ independent evolutions such that, starting from an initial distribution $\marg(0),$ the system follows {the whole trajectory of the particular chain} $\marg^{(i)}(t)$ with a probability $\marg_i(0)$, and thus it converges towards $\pi$ in finite time for all the valid $x(0)$ associated to any $p(0) \in \PP\siggy{N}$.

We can indeed combine these target sequences for different $i$ independently, by using a lifted Markov chain which we call a \textit{node-clock lift}. The graph is now further augmented by constructing the product graph that includes the time index, as in the previous point of this appendix, and {\em two} copies of the original graph, as depicted on Figure \ref{fig:nclift}. The lifted node space becomes $\lift{\V} = \{(t,v_0,v) : t \in \{0,1,...,T+1\}  \text{ and both } v_0,v \in \V \}$. The surjective map that shows how this is a lift is $\c:\lift{\V}\to\V$ where $\c((t,v_0,v)) = v.$ 
We consider dynamics on the lifted graph associated to
 	\begin{eqnarray} \label{eq:node-clock} 
 		A & \hspace{-10mm}=\; \sum_{t=1}^T \sum_{i\in\V} e_te_{t-1}^\adj \otimes e_ie_i^\adj \otimes P^{(i)}(t) \\
 				&+ e_{T+1} e_T^\adj \otimes \Pi^{\V}\otimes I_{\V} + e_{T+1} e_{T+1}^\adj \otimes I_{\V}\otimes I_{\V}\nonumber 
 	\end{eqnarray}
with the same notation as in the previous paragraph. Here $\Pi^{\V}$ reinitializes all $v_0 \in \V$ to the same initial state, i.e. $\Pi^{\V}x=\pi$ for all vectors $x\in\mathbb{P}_\V.$ The lift is  initialized using:
$$F : \marg (0) \mapsto \dist(0) = e_0\otimes \sum_{v\in\V} \marg _v(0) (e_v\otimes e_v) \, .$$
In particular, for a distribution $e_i$ concentrated on any initial node $i$ of the original graph, the initial state for the lift is $\dist(0) = e_0\otimes e_i\otimes e_i$.
Note that some of the lifted nodes will never be populated (e.g.~$(0,v_0,v) \in \lift{\V}$ with $v_0\neq v$), so in fact $\lift{\V}$ can be slightly reduced at the cost of a somewhat less compact description.
If each of the $\{P^{(i)}(k)\}$, set up for starting on a node $i$, was driving the state on $\V$ towards the same target distribution $\pi$ in $T$ steps, then on the lift after $T$ steps, each sub-evolution starting from $e_0\otimes e_i\otimes e_i$ will have reached the state $e_T\otimes e_{i} \otimes \pi$; and hence by convexity for \emph{any initial distribution} on $\V$, provided the lift is accordingly initialized with $F$, the induced marginal on the original state will have reached $\pi.$
The extra step does not change the marginal and is used to ensure convergence in finite time on the whole lifted space: the choice of projecting onto $\pi$ is not serving any particular purpose.
Similarly to the periodic clock-lift, we can construct a periodic node-clock-lift, where we identify $e_{T+1}\otimes e_v\otimes e_v$ with $e_0\otimes e_{v}\otimes e_v$, i.e.~any walk that has arrived at node $v$ at time $T+1$ starts again with the sequence $P^{(v)}(t)$.

\begin{figure}[t]
\hspace{-3mm}
\includegraphics[width=\columnwidth]
{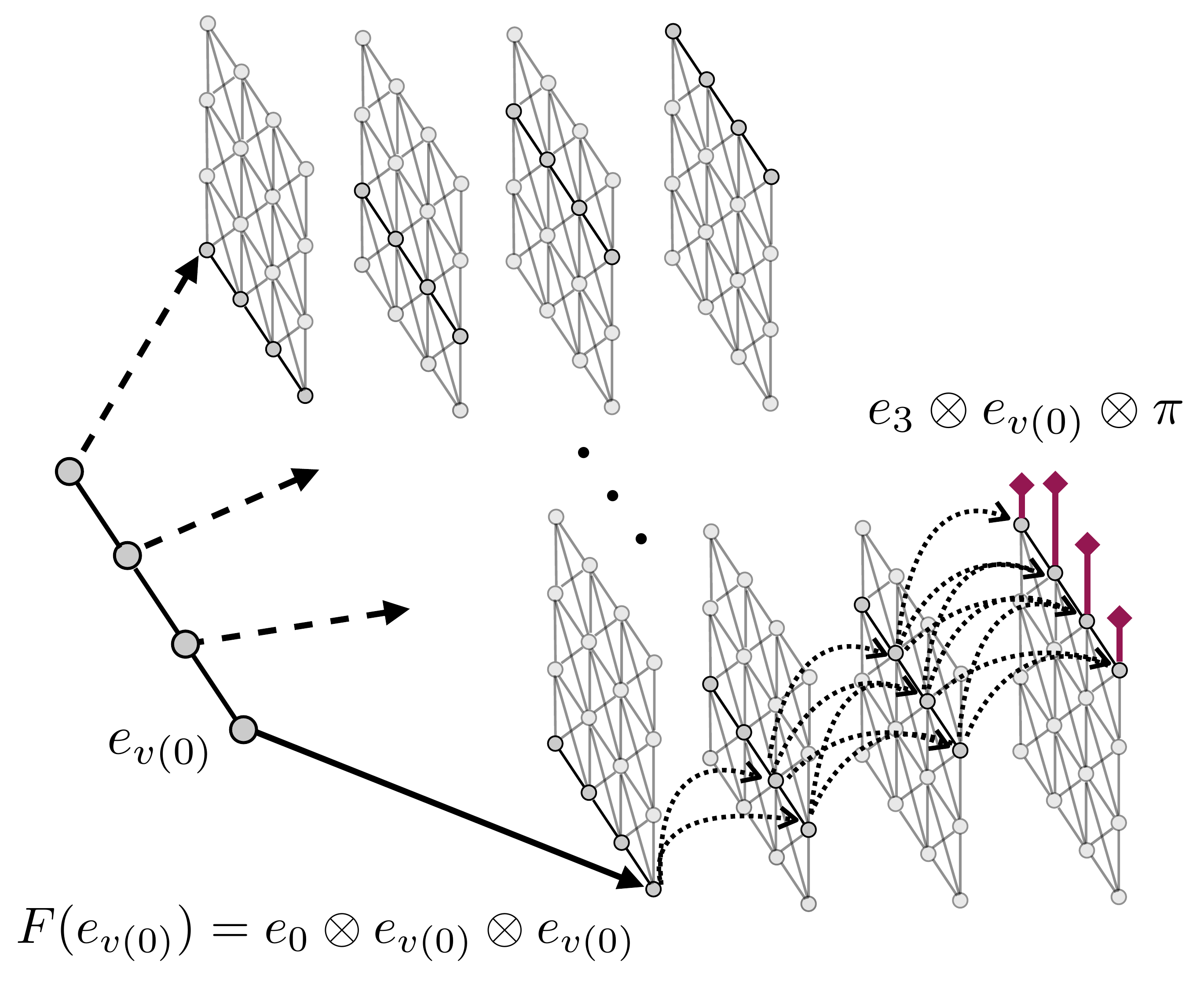}
	\caption{The same initial graph $\G$ as on Fig.\ref{fig:clocklift} (a path on 4 nodes) and
	the associated ``node-clock-lift'', in which the chosen time-inhomogeneous evolutions for each extreme initial distributions $\marg(0)=e_i$, are combined into a single time-homogeneous lifted chain by creating $N$ clock lifts.}\label{fig:nclift}
\end{figure}

\vspace{3mm}\subsubsection{Stochastic bridge}
In the above lifts, we join different $\{P^{(i)}(k)\}$ depending on the initial node $i \in \V$. In the applications of interest, these will correspond to time-inhomogeneous evolutions attaining a target distribution in finite (diameter) time.
More precisely, consider two distributions $\marg$ and $\marg'$ over the nodes $\V$ of a graph $\G$ with diameter $D_\G$. There exists a time-varying Markov chain $\{P(t)\}_{t=1}^{D_\G}$ such that $\marg'=P(D_\G)P(D_\G-1)\dots P(1)\, \marg$, where all the $P(t)$ satisfy the locality constraints imposed by $\G$, see e.g.~\cite{pavon2010,georgiou2015}. We call this sequence $\{P(t)\}_{t=1}^{D_\G}$ a \textit{stochastic bridge} from $\marg$ to $\marg'$. We shall build stochastic bridges from each $\marg=e_i$ towards the same target $\marg' = \pi$, and use the node-clock lift to join them into a single lifted Markov chain.

\end{document}